\theoremstyle{plain}
\newtheorem{Thm}{Theorem}
\newtheorem{Lem}[Thm]{Lemma}
\theoremstyle{definition}
\newtheorem{Def}[Thm]{Definition}
\newcommand{\comment}[1]{}
\begin{document}

\title[Stable functions and common stabilizations]{Stable functions and common stabilizations of Heegaard splittings}
\author{Jesse Johnson}
\address{\hskip-\parindent
        Department of Mathematics\\
        Yale University\\
        New Haven, CT 06520\\
        USA}
\email{jessee.johnson@yale.edu}
\subjclass{Primary 57M}
\keywords{Heegaard splitting, stabilization, Rubinstein-Scharlemann graphic}

\thanks{Research supported by NSF MSPRF grant 0602368}

\begin{abstract}
We present a new proof of Reidemeister and Singer's Theorem that any two Heegaard splittings of the same 3-manifold have a common stabilization.  The proof leads to an upper bound on the minimal genus of a common stabilization in terms of the number of negative slope inflection points and type-two cusps in a Rubinstein-Scharlemann graphic for the two splittings.
\end{abstract}

\maketitle

\section{Introduction}
\label{introsect}
A \textit{handlebody} is a 3-manifold homeomorphic to the closure of a regular neighborhood of a connected, finite graph embedded in $\mathbf{R}^3$.  A \textit{Heegaard splitting} for a compact, closed, orientable 3-manifold $M$ is a triple $(\Sigma, H_1, H_2)$ where $\Sigma \subset M$ is a closed, orientable surface and $H_1, H_2 \subset M$ are handlebodies such that $\partial H_1 = \Sigma = \partial H_2 = H_1 \cap H_2$ and $H_1 \cup H_2 = M$.  A \textit{stabilization} of a Heegaard splitting $(\Sigma, H_1, H_2)$ is a new Heegaard splitting constructed by taking a connect sum of $(\Sigma, H_1, H_2)$ with a (standard) Heegaard splitting of $S^3$.  The details of this construction will be described later.

In 1935, Reidemeister~\cite{reid} and Singer~\cite{sing} independently discovered that for any two Heegaard splittings of a given manifold, there is always a third Heegaard splitting that is isotopic to a stabilization of each of the original splittings.  This third Heegaard splitting is called a \textit{common stabilization}.  The \textit{stable genus} of the two original splittings is the smallest possible genus of a common stabilization.

Neither of Reidemeister and Singer's constructions suggest how small one can expect the stable genus to be.  For many examples, there is a common stabilization of genus $p + 1$ where $p$ and $q$ are the genera of the two original splittings, with $p \leq q$.  Rubinstein and Scharlemann~\cite{rub:compar} found a construction for Heegaard splittings of non-Haken manifolds producing a common stabilization of genus at most $5p + 8q - 9$.  This and a quadratic bound for Haken manifolds found later by Rubinstein and Scharlemann~\cite{rub:compar2} are the only known bounds for the stable genus purely in terms of the genera of the original splittings.

In this paper we present a new proof of the existence of common stabilizations.  While this proof does not suggest a bound on the stable genus purely in terms of the genera of the original splittings, it does provide a bound in terms of the original genera plus a somewhat unexpected number.

In their construction of common stabilizations, Rubinstein and Scharlemann~\cite{rub:compar} look at a pair of sweep-outs for the original two Heegaard splittings and define a graph in $\mathbf{R}^2$ which they call the \textit{graphic}.  As Kobayashi and Saeki~\cite{Kob:disc} pointed out, the graphic can be thought of as the image of the discriminant set of a stable function on the complement of the spines of the sweep-outs.  In this paper, rather than looking at sweep-outs representing the Heegaard splittings, we will look at Morse functions.  The two Morse functions define a similar graphic, which is now the image of the discriminant set of a stable function on all of $M$.  

The graphic is the image in $\mathbf{R}^2$ of a smooth immersion with cusps of one or more copies of $S^1$.  We will say that a cusp is type one if a line tangent at the cusp separates the two edges that end at the cusp.  If a line tangent to the cusp does not separate the two edges then the cusp is type two.

We will show that each edge of the graphic can be labeled as either a definite fold edge or an indefinite fold edge.  Each cusp connects an edge of definite fold points to an edge of indefinite fold points.  At each point in the interior of an edge, if we think of the edge as the graph of a function (possibly after rotating the graphic to make it one-to-one), we can define the second derivative of the edge at $p$.  An \textit{inflection point} is one where the second derivative is zero.  (Although the second derivative will change if we rotate the graphic, it will remain zero or non-zero.)  Let $c$ be the number of inflection points with negative slope in the indefinite fold edges (terminology to be defined later) of the graphic.  We will prove the following:

\begin{Thm}
\label{mainthm}
There is a common stabilization of genus less than or equal to $(p + q + c)/2$.
\end{Thm}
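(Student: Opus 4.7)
The plan is to work with the stable map $F = (f_1, f_2) : M \to \mathbf{R}^2$ coming from Morse functions $f_1, f_2$ adapted to the two Heegaard splittings, so that the discriminant of $F$ projects to the graphic $G$ and regular level sets $f_1^{-1}(c_1)$, $f_2^{-1}(c_2)$ realize the two original Heegaard surfaces of genera $p$ and $q$. After a generic perturbation ensuring $F$ is stable, each fiber of $F$ over $\mathbf{R}^2 \setminus G$ is a disjoint union of circles. I would first record the local models for $F$ near each type of feature: a definite fold edge creates or destroys a small disk-bounding circle in the fiber, an indefinite fold edge performs a band move, and cusps realize births and deaths of critical points of the induced fiberwise Morse function. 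This catalogue dictates how $F^{-1}(\gamma)$ changes when a smooth path $\gamma \subset \mathbf{R}^2$ crosses each kind of feature.

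Next I would interpolate between the two Heegaard surfaces by a smooth one-parameter family of properly embedded arcs $\gamma_t \subset \mathbf{R}^2$, $t \in [0,1]$, with $\gamma_0$ the vertical line $\{x = c_1\}$ and $\gamma_1$ the horizontal line $\{y = c_2\}$, chosen generically with respect to $G$. Set $S_t := F^{-1}(\gamma_t)$; then $S_0 = f_1^{-1}(c_1)$ and $S_1 = f_2^{-1}(c_2)$ are the two original Heegaard surfaces, and for generic $t$ the surface $S_t$ separates $M$ into the preimages of the two complementary half-planes of $\gamma_t$. The obstruction to $S_t$ being a Heegaard surface is that these preimages need not be handlebodies; the plan is to tube $S_t$ at controlled moments of $t$ so that the resulting thickened surfaces assemble into a single surface $\Sigma^*$ that is simultaneously a stabilization of both originals.

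To count the tubes I would translate the topological changes in $S_t$ into Euler-characteristic bookkeeping. Transverse crossings of definite fold edges and pairs of oppositely-oriented transverse crossings of indefinite fold edges cancel, their net contribution being governed by the critical-point counts of $f_1$ and $f_2$ and accounting for a $p + q$ term. The features that fail to cancel in pairs should be precisely the negative-slope inflection points of the indefinite fold edges: at such a point the local direction of the band move reverses without a matching later reversal, so the associated tube cannot be absorbed into a cancellation and must remain in $\Sigma^*$. Summing the uncancelled contributions and converting Euler characteristic back to genus via $\chi = 2 - 2g$ should then yield the bound $(p + q + c)/2$.

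The main obstacle I anticipate is the local analysis at inflection points and cusps: verifying that a negative-slope inflection point forces a persistent stabilization while positive-slope inflection points, type-one and type-two cusps, and generic transverse crossings contribute nothing beyond the $p + q$ baseline. This requires a careful computation in the normal form of the stable map $F$ near an inflection point of an indefinite fold, identifying the sign of the second derivative with the direction of an induced band move and showing that negative slope prevents the band move from being undone further along the family. A secondary technicality is arranging the family $\gamma_t$ to meet $G$ only at transverse crossings and at the counted inflection points, so that no higher-order degeneracies contribute additional tubes.
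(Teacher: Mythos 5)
Your proposal identifies the right objects---the stable map $F = f \times g$, its graphic, the local models at fold and cusp points, and the central role of negative-slope inflection points---but the mechanism you propose for producing and counting the common stabilization does not match what is needed, and at one point your intuition for \emph{why} negative-slope inflection points matter is incorrect.

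The essential idea the paper uses, which your proposal replaces with an ill-defined ``tubing'' step, is to convert everything into a one-parameter family of Morse functions $\phi_t = \cos(t)\, g + \sin(t)\, f$ on $M$ itself (not just a family of surfaces $S_t = F^{-1}(\gamma_t)$). Each $\phi_t$ induces a well-defined Heegaard splitting via Schultens' amalgamation of the generalized Heegaard splitting determined by the critical levels of $\phi_t$; a single surface $S_t$ is not itself a Heegaard surface, and ``tubing $S_t$ at controlled moments of $t$'' is not a well-defined operation with a specified genus outcome. The level sets of $\phi_t$ are $F^{-1}$ of \emph{parallel straight lines} perpendicular to the rotated axis, so the relevant data is the whole family of parallel lines at angle $t$, not one arc $\gamma_t$; the amalgamation step is exactly what assembles this family into a genuine splitting. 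Concretely: critical points of $\phi_t$ correspond to horizontal tangencies of the graphic rotated by $t$ (Lemma~\ref{horizcritlem}), $\phi_t$ stays Morse on each open interval between the finitely many bad angles (Lemma~\ref{finitenonmorselem}), and the Heegaard splitting changes by a \emph{single} stabilization or destabilization exactly at angles where a negative-slope inflection point on an indefinite fold edge or a negative-slope type-two cusp becomes horizontal. The genus bound then comes not from Euler-characteristic bookkeeping but from Lemma~\ref{ifcstabslem}: any chain of $c$ single (de)stabilizations between genus-$p$ and genus-$q$ splittings yields a common stabilization of genus $(p+q+c)/2$, because adjacent destabilize-then-stabilize pairs may be cancelled using uniqueness of stabilizations.

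Your explanation of why only negative-slope inflection points contribute is also off. You say they ``force a persistent stabilization'' because a band move reverses ``without a matching later reversal.'' In fact the restriction to negative slope is purely about the angular range: the family rotates the graphic through $t \in [0,\pi/2]$, so an inflection point becomes horizontal during the rotation precisely when its slope is negative; positive-slope inflection points are never encountered, and that is the whole reason they do not appear in $c$. There is no persistence or cancellation argument among inflection points themselves. Finally, your statement that ``pairs of oppositely-oriented transverse crossings of indefinite fold edges cancel, their net contribution... accounting for a $p+q$ term'' has no counterpart in the actual argument; the $p+q$ enters only through the start and end genera fed into Lemma~\ref{ifcstabslem}. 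To salvage your approach you would need to (i) make precise which surface is being built and prove it is a Heegaard surface, which leads you back to amalgamation or an equivalent, and (ii) replace the Euler-characteristic accounting by the cancellation-of-(de)stabilizations lemma, or prove an Euler-characteristic version of it.
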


The number of negative slope inflection points and type two cusps in the graphic seems at first like a rather arbitrary measure of complexity.  However, it fits into the proof in a very natural way.  Moreover, the number of inflection points and cusps is an indirect measure of the number of crossings in the graphic.  For example, if two edges cross each other $n$ times then between the two edges there will be at least $n - 2$ inflection points.  

Theorem~\ref{mainthm} suggests that in order to get a bound on the stable genus in terms of only $p$ and $q$, one could look for a way to simplify the graphic 
for any two Morse functions until $c$ is bounded.  This appears to be no simple task.  In particular, the number of crossings in the graphic is related to the pants distance of each of the two Heegaard splittings (see~\cite{johnson}) and therefore can be arbitrarily high.

The proof of Theorem~\ref{mainthm} relies on the analogy between Heegaard splittings and Morse functions, using a method similar to Hatcher and Thurston's construction~\cite{hatcherthurston} of a presentation for the mapping class group of a surface:  We replace the two Heegaard splittings with two Morse functions, then connect them by a generic path in $C^\infty(M)$.  This path passes through a finite number of near Morse functions.  At these points, the induced Heegaard splittings either don't change, or change in a simple way which is exactly stabilization or destabilization.

When the original two Morse functions are in general position, a straight line connecting them in $C^\infty(M)$ will be a generic path.  Edelsbrunner~\cite{edels} pointed out that the critical points of the intermediate functions are all in the discriminant set of the function $f \times g : M \rightarrow \mathbf{R}^2$.  Because the Rubinstein-Scharlemann  graphic is related to the discriminant set of $f \times g$, this allows us to read information about this path from the graphic.  We will show that when the path passes through a non-Morse function, it induces a stabilization or destabilization corresponding to an inflection point with negative slope in an edge of the graphic or a type-two cusp with negative slope.  This leads to the proof of Theorem~\ref{mainthm}.

Stabilization is described in more detail in Section~\ref{stabsect}.  The connection between Heegaard splittings and Morse functions is discussed in Section~\ref{morsesect}, then Section~\ref{graphicsect} introduces stable function and the Rubinstein-Scharlemann graphic.  In Section~\ref{reebsect}, the singularities of stable functions from 3-manifolds to $\mathbf{R}^2$ are described by looking at Stein filtrations of the functions.  The connection between graphics and stabilizations is described in Section~\ref{linpathsect}, leading to the proof of Theorem~\ref{mainthm} in Section~\ref{proofsect}.  I want to thank Abby Thompson for pointing me in the direction that led to this proof.

\section{Stabilization}
\label{stabsect}

In this section we describe the correspondence between Morse functions and Heegaard splittings that is the basis for the rest of the paper.  Recall that a handlebody is a manifold homeomorphic to the closure of a regular neighborhood of a connected, finite, embedded graph in $\mathbf{R}^3$.  Such a manifold can be thought of as the result of attaching a number of 1-handles to 0-handles in a way that produces a compact, connected and orientable manifold.  

A \textit{compression body} $H$ is a connected, orientable 3-manifold that results from attaching a number of 1-handles to 0-handles and to the $F \times \{0\}$ boundary of a manifold $F \times [0,1]$, where $F$ is a compact closed, not necessarily connected surface with no sphere components.  The union of the boundary components of $H$ coming from $F \times \{1\}$ are written $\partial_- H$ and the remaining component is $\partial_+ H$.  When $F$ is empty, the compression body $H$ is a handlebody with $\partial_+ H = \partial H$ and $\partial_- H = \emptyset$.

In Section~\ref{introsect}, we defined Heegaard splittings for closed manifolds.  
For a compact, connected, orientable 3-manifold $M$ with boundary, a \textit{Heegaard splitting} is a triple $(\Sigma, H_1, H_2)$ where $\Sigma$ is a compact, closed, orientable surface and $H_1$ and $H_2$ are compression bodies such that $\partial_+ H_1 = \Sigma = \partial_+ H_2$ and $\partial M = \partial_- H_1 \cup \partial_- H_2$.  Moreover, the union of $H_1$ and $H_2$ must be all of $M$ and their intersection must be precisely $\Sigma$.

Recall that given 3-manifolds $M_1$ and $M_2$, the \textit{connect sum} $M_1  \# M_2$ is the result of removing an open ball from each manifold and gluing together the resulting spherical boundary components.  Given a Heegaard splitting for each manifold, if we choose the open ball in the manifold to intersect the Heegaard surface in an open disk, then we can glue the manifolds so as to induce a Heegaard splitting on $M_1 \# M_2$.  If $M_2 \cong S^3$ then $M_1 \# M_2 \cong M_1$ and the Heegaard splitting coming from $M_1 \# M_2$ is called a \textit{stabilization}.  The original Heegaard splitting will be called a \textit{destabilization} of the new one.

The connect sum can, of course, be taken along any open disk in the original Heegaard splitting and with any Heegaard splitting of $S^3$.  However, because Heegaard splittings of $S^3$ are standard (see~\cite{wald}, or a number of more recent proofs) and any two open disks in a surface are isotopic, we get the following result.  (Details of the proof are left to the reader.)

\begin{Lem}
\label{stabisstablem}
Let $(\Sigma, H_1, H_2)$ be a Heegaard splitting.  Any stabilization of a stabilization of $(\Sigma, H_1, H_2)$ is isotopic to a stabilization of $(\Sigma, H_1, H_2)$.  Any two stabilizations of $(\Sigma, H_1, H_2)$ are isotopic if and only if they have the same genus.
\end{Lem}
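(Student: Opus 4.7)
The plan is to deduce both statements from Waldhausen's classification of genus-$k$ Heegaard splittings of $S^3$ (all are isotopic to the standard one) together with the fact that any two embedded disks in a connected surface are ambient isotopic. For the first statement, I would start with a double stabilization $((\Sigma, H_1, H_2) \# (S_1, A_1, A_2)) \# (S_2, B_1, B_2)$ and first isotope the second summing disk within the already-stabilized surface so that it lies in $\Sigma$, away from the region used for the first connect sum. Then associativity of the connect sum rewrites the result as $(\Sigma, H_1, H_2)$ summed with the single Heegaard splitting $(S_1, A_1, A_2) \# (S_2, B_1, B_2)$ of $S^3 \# S^3 \cong S^3$; by Waldhausen this is a standard splitting of $S^3$, so the double stabilization is realized as a single stabilization of $(\Sigma, H_1, H_2)$.

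For the second statement the ``only if'' direction is immediate from the invariance of genus under isotopy. For the ``if'' direction I would take two genus-$g$ stabilizations built by summing with $S^3$-splittings $(S, A_1, A_2)$ and $(S', A_1', A_2')$ along disks $D, D' \subset \Sigma$. Since both stabilized surfaces have genus $g$, the two $S^3$ summands must themselves have the same genus; Waldhausen then yields an isotopy of $S^3$ identifying the two, so we may reduce to the case where the same splitting $(S, A_1, A_2)$ is used in both constructions. An ambient isotopy of $\Sigma$ carrying $D$ to $D'$, extended through a bicollar $\Sigma \times [-1, 1] \subset M$ and damped to the identity outside the collar, is an ambient isotopy of $M$ that preserves the compression bodies $H_1$ and $H_2$ and carries one stabilization to the other.

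I expect the main obstacle to be the bookkeeping needed to check that each of the isotopies above is genuinely compatible with the Heegaard structure. The Waldhausen isotopy of $S^3$ must be arranged to be the identity on a small ball about the summing point, so that it respects the connect-sum gluing; this uses that the mapping class group of $S^3$ is trivial relative to a ball. The bicollar extension of the surface isotopy must be damped so that it fixes the frontier of the collar and hence preserves each compression body, which requires tracking the orientation of the bicollar. Finally, one must verify that the identification of the two summing boundary spheres (including the matching of their intersections with the respective Heegaard surfaces) is itself unique up to isotopy, so that the connect-sum gluing produces a well-defined Heegaard splitting. Each of these points is standard, which is presumably why the author leaves the details to the reader.
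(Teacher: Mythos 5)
Your proposal is correct and fleshes out exactly the two ingredients the paper sketches before stating the lemma (Waldhausen's theorem that Heegaard splittings of $S^3$ are standard, and the fact that any two embedded disks in a surface are ambient isotopic); the paper itself leaves the details to the reader. The connect-sum associativity argument for the first claim and the reduction-to-a-common-$S^3$-summand plus disk-isotopy argument for the second are the standard way to fill in those details, so your route is essentially the paper's intended one.
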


As described in Section~\ref{introsect}, our goal is to show that any two Heegaard splittings of the same manifold have a common stabilization.  Previous proofs have done this by directly constructing this common stabilization.  In this proof, we will go about it rather indirectly.  

Note that a Heegaard splitting $(\Sigma, H_1, H_2)$ is determined entirely by the Heegaard surface $\Sigma$, up to labeling of the handlebodies.  In the following discussion, we will refer to the Heegaard splittings by their Heegaard surfaces in order to avoid overly complicated notation.  Consider a sequence $\Sigma_0,\dots,\Sigma_c$ of Heegaard surfaces such that for each $i \leq c$, $\Sigma_{i+1}$ is a single stabilization of $\Sigma_i$ (a connect sum with a genus one Heegaard splitting of $S^3$) or a single destabilization of $\Sigma_{i+1}$.  

The genera of the splittings in the sequence from $\Sigma_1$ to $\Sigma_c$ go up and down as we pass through the sequence.  If we can find a sequence $\Sigma_0,\dots,\Sigma_c$ such that the genus only increases from $\Sigma_0$ to $\Sigma_d$ for some $d$, then only decreases from $\Sigma_d$ to $\Sigma_c$ then the first half of Lemma~\ref{stabisstablem} tells us that $\Sigma_d$ is a common stabilization for $\Sigma_0$ and $\Sigma_c$.  The second half of Lemma~\ref{stabisstablem} allows us to throw away the condition that the genera of the surfaces first increase, then decrease.  In particular, it implies the following:

\begin{Lem}
\label{ifcstabslem}
If there is a sequence $\Sigma_0,\dots, \Sigma_c$ of single stabilizations and destabilizations then $\Sigma_0$ and $\Sigma_c$ have a common stabilization of genus $(p + q + c) / 2$.
\end{Lem}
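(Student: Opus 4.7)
I would prove this by induction on $c$. The base case $c = 0$ is trivial: $\Sigma_0 = \Sigma_c$ itself plays the role of common stabilization, with genus $p = q = (p+q+0)/2$.

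For the inductive step, I would split into two cases depending on the shape of the sequence. First, suppose the sequence is monotone, in the sense that all stabilization steps precede all destabilization steps. If $d$ is the number of stabilization steps, then the peak $\Sigma_d$ has genus $p + d$, and solving $q = p + d - (c - d)$ gives $d = (c + q - p)/2$, so the peak genus is $(p + q + c)/2$. Since $\Sigma_d$ is reached from $\Sigma_0$ by $d$ successive stabilizations, the first half of Lemma~\ref{stabisstablem}, applied inductively, shows that $\Sigma_d$ is isotopic to a single stabilization of $\Sigma_0$. The same argument applied to the down-slope shows $\Sigma_d$ is a stabilization of $\Sigma_c$, so $\Sigma_d$ is a common stabilization of the required genus.

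Next, suppose the sequence is not monotone. Then there is some index $i$ where $\Sigma_{i+1}$ is a destabilization of $\Sigma_i$ and $\Sigma_{i+2}$ is a stabilization of $\Sigma_{i+1}$. Thus $\Sigma_i$ and $\Sigma_{i+2}$ are both stabilizations of $\Sigma_{i+1}$, and they have the same genus, so by the second half of Lemma~\ref{stabisstablem} they are isotopic. Deleting $\Sigma_{i+1}$ and $\Sigma_{i+2}$ from the sequence (and replacing them by the identification $\Sigma_i \cong \Sigma_{i+2}$) yields a valid sequence $\Sigma_0,\ldots,\Sigma_i,\Sigma_{i+3},\ldots,\Sigma_c$ of single stabilizations and destabilizations of length $c - 2$. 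By induction, $\Sigma_0$ and $\Sigma_c$ have a common stabilization $\Sigma'$ of genus $(p + q + c - 2)/2$. Stabilizing $\Sigma'$ once and invoking the first half of Lemma~\ref{stabisstablem} produces a common stabilization of $\Sigma_0$ and $\Sigma_c$ of genus $(p + q + c)/2$, as required.

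The main obstacle here is really organizational rather than technical: one has to recognize that the second half of Lemma~\ref{stabisstablem} is precisely the tool needed to "erase a valley" in the sequence, since any two stabilizations of the same splitting at the same genus agree up to isotopy. Once that observation is in place, the induction is mechanical, and the parity requirement that $(p+q+c)/2$ be an integer is automatic because each step of the sequence changes the genus by $\pm 1$, forcing $q - p$ and $c$ to have the same parity.
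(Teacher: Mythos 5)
Your proof is correct and uses the same key idea as the paper: erase a local minimum (``valley'') in the genus sequence by observing that $\Sigma_i$ and $\Sigma_{i+2}$ are both single stabilizations of $\Sigma_{i+1}$ of the same genus, hence isotopic by the second half of Lemma~\ref{stabisstablem}, and then handle the resulting monotone sequence by taking the peak. The paper phrases this as ``remove all valleys, then the peak works'' while you phrase it as an induction on $c$ (removing one valley per step and re-stabilizing), but the mathematical content is identical; your handling of the parity/genus bookkeeping is also fine. One small wording slip: in the monotone case you say $\Sigma_d$ is isotopic to ``a single stabilization'' of $\Sigma_0$, but for $d>1$ it is a stabilization (in the general connect-sum sense) rather than a \emph{single} stabilization, which is what the first half of Lemma~\ref{stabisstablem} actually gives you.
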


Note that $c \equiv (p + q)$ mod $2$ and $c \geq |p-q|$ so $(p + q + c) / 2$ is always an integer no less than $p$ or $q$.

\begin{proof}
If there is a common stabilization of genus $g$ then there is a common stabilization of genus $g + h$ for any positive integer $h$.  Thus we need only show that there is a common stabilization of genus less than or equal to $(p + q + c) / 2$.  Let $\Sigma_0,\dots,\Sigma_c$ be a sequence single stabilizations and destabilizations.  

If $\Sigma_i$ is a destabilization of $\Sigma_{i-1}$ and $\Sigma_{i+1}$ is a stabilization of $\Sigma_i$ (i.e. the genus decreases, then increases) then by definition, both $\Sigma_{i-1}$ and $\Sigma_{i+1}$ are (single) stabilizations of $\Sigma_i$.  By Lemma~\ref{stabisstablem}, this implies $\Sigma_{i+1}$ is isotopic to $\Sigma_{i-1}$.  Thus we can remove $\Sigma_i$ and $\Sigma_{i+1}$ from the sequence, then renumber to get a new, shorter sequence $\Sigma_0,\dots,\Sigma_{c'}$ of single stabilizations and destabilizations with isotopic starting and ending surfaces.

By removing any extra destabilization-stabilization pairs in this way, we can replace the original sequence of surfaces with a possibly shorter sequence in which the genera increase from $\Sigma_0$ to some $\Sigma_d$, then decrease from $\Sigma_d$ to $\Sigma_{c'}$ (with $c' \leq c$).  As noted above, $\Sigma_d$ is a common stabilization of $\Sigma_0$ and $\Sigma_{c'}$ (which is isotopic to $\Sigma_c$).  One can check that $\Sigma_d$ has genus at most $(p + q + c') /2 \leq (p + q + c) /2$, completing the proof.
\end{proof}

In the following sections, we will construct a sequence of stabilizations and destabilizations as above for an arbitrary pair of Heegaard splittings.

\section{Morse functions}
\label{morsesect}

Recall that a Morse function on a smooth manifold $M$ is an infinitely differentiable function $f : M \rightarrow \mathbf{R}$ (i.e. $f$ is in $C^\infty(M,\mathbf{R})$) with certain properties.  We will now review these properties in detail.

Recall that given a point $p$ in a smooth manifold $M$ and a smooth function $f \in C^\infty(M, \mathbf{R})$, the \textit{gradient} of $f$ at $p$ is the vector defined by the partial derivatives of $f$ at $p$.   The point $p$ is a \textit{critical point} of $f$ if the gradient of $f$ at $p$ is zero.  The \textit{Hessian} of $f$ at $p$ is the matrix of second derivatives of $f$ at $p$.  A critical point is \textit{degenerate} if the determinant of the Hessian is zero, and \textit{non-degenerate} otherwise (see~\cite{milnor} for details). 

The behavior of a function near a non-degenerate critical point was classified by Morse.  In a 3-dimensional manifold, there are four types of non-degenerate critical points.  The behavior of the level sets of $f$ in neighborhoods of these four types of points is shown in Figure~\ref{crits3fig}.
\begin{figure}[htb]
  \begin{center}
  \includegraphics[width=2in]{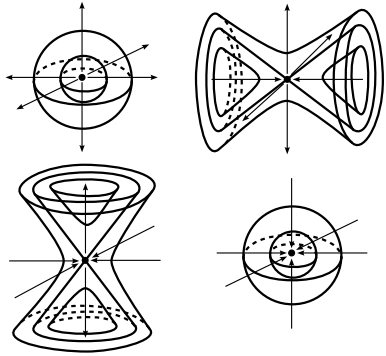}
  \caption{The four types of critical points of a 3-dimensional Morse function.}
  \label{crits3fig}
  \end{center}
\end{figure}

Near these points, $f$ is modeled by $f(x,y,z) = x^2 + y^2 + z^2$, $f(x,y,z) = x^2 + y^2 - z^2$, $f(x,y,z) = x^2 - y^2 - z^2$ or $f(x,y,z) = - x^2 - y^2 - z^2$.  Each critical point is said to have \textit{index} equal to the number of negative signs in the local description.  Thus the top two critical points shown in Figure~\ref{crits3fig} have index zero and one, while the bottom two have index two and three.

Note that there is an open neighborhood around each non-degenerate critical point such that it is the only critical point in that neighborhood.  The \textit{level} of a critical point $p \in M$ is simply $f(p)$.  If $\ell \in \mathbf{R}$ is the level of a critical point then $\ell$ is called a \textit{critical level}.  Otherwise, $\ell$ is called a \textit{regular level}.

\begin{Def}
A \textit{Morse function} is a smooth function such that (1) every critical point is non-degenerate and (2) no two critical points are at the same level.
\end{Def}

Each non-degenerate critical point is contained in an open neighborhood containing no other critical points.  Thus there is an open cover of $M$ such that each open set contains at most one critical point.  If $M$ is compact then the finite sub-cover property implies that there are finitely many critical points, and finitely many critical levels.

If $M$ is a manifold with boundary, a Morse function on the interior of $M$ is called \textit{proper} if in some neighborhood of $\partial M$, the level sets of $f$ consist entirely of boundary parallel surfaces, and $f$ extends (uniquely) to $\partial M$.  Given a proper Morse function $f : M \rightarrow \mathbf{R}$ and an interval $[a,b] \subset \mathbf{R}$ such that $a$ and $b$ are regular levels, the restriction of $f$ to the submanifold $f^{-1}[a,b] \subset M$ will be a proper Morse function on $f^{-1}[a,b]$.

The connection between Heegaard splittings and Morse functions is a result of the following Lemma:

\begin{Lem}
\label{morsehandlem}
Let $M$ be a compact, orientable 3-manifold.  If there is a proper Morse function $f : M \rightarrow \mathbf{R}$ with only index zero and index one critical points then every component of $M$ is a compression body.
\end{Lem}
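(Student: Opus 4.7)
The plan is to use the Morse function $f$ to realize each component of $M$ as a handle decomposition built from a collar of the ``bottom'' boundary by attaching $0$-handles and $1$-handles, and then observe that this matches the definition of a compression body. Since $f$ has no index $2$ or $3$ critical points, it has no interior local maxima, so each component of $M$ must attain its maximum value of $f$ on the boundary. Because $f$ extends to $\partial M$ and the nearby level sets are boundary-parallel, each boundary component $S$ is monotonic for $f$ in the collar direction, so I can partition $\partial M = \partial_- M \cup \partial_+ M$ according to whether $f$ increases into $M$ from $S$ or decreases into $M$ from $S$.

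Next, I would fix a component $M_0$ of $M$ and run a sweep-up argument on the sublevel sets $M_0^t = f^{-1}((-\infty, t]) \cap M_0$, using the gradient flow of $f$ (with respect to some Riemannian metric) as the product structure on each collar. I would argue by induction on the number of critical values and boundary values of $f$ below $t$ that $M_0^t$ is always a disjoint union of pieces of the form ``collar plus $0$- and $1$-handles.'' The topology of $M_0^t$ changes in one of four elementary ways as $t$ crosses a relevant value: a new collar of a component of $\partial_- M_0$ appears as a disjoint piece; an index $0$ critical point introduces a $0$-handle as a disjoint $3$-ball; an index $1$ critical point attaches a $1$-handle along the current top boundary, possibly joining two pieces into one; or crossing a value of $f$ on $\partial_+ M_0$ simply appends a boundary component without changing the interior topology.

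Setting $F = \partial_- M_0$ and combining, $M_0$ is built from $F \times [0,1]$ together with some collection of $0$-handles and $1$-handles attached to $F \times \{0\}$ and to each other. Connectedness of $M_0$ forces the $1$-handles to link together all the pieces that appeared during the sweep, and the remaining outer boundary is $\partial_+ M_0$, which plays the role of $\partial_+ H$. This matches the definition of a compression body verbatim, so each component of $M$ is a compression body.

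The main obstacle is the bookkeeping needed to make the compression body structure emerge cleanly from the sweep: different components of $\partial_- M_0$ may appear at different $f$-values, and the $1$-handles attached in between must be reinterpreted as attached to a fully assembled $F \times [0,1]$ together with the $0$-handles. This is a standard handle-reordering argument, using the fact that the relative order in which $0$-handles and collars are laid down does not affect the resulting manifold up to diffeomorphism once the attaching maps of the $1$-handles are isotoped appropriately along the gradient flow. A secondary technicality is the restriction in the definition that $F$ have no sphere components, which is handled by pairing any sphere components of $\partial_- M_0$ with adjacent $0$-handles to form $3$-balls absorbed into the handlebody part of the decomposition.
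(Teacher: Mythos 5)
The paper leaves this lemma's proof entirely to the reader (``can be deduced from the classification of Morse singularities described above''), so there is no explicit argument to compare against. Your sweep-up on sublevel sets, using the gradient flow to build a handle decomposition from a collar of $\partial_- M_0$ together with $0$- and $1$-handles, followed by a standard handle-reordering step, is exactly the argument the paper is gesturing at and is correct in outline. Two small remarks. First, you should note (or observe that it follows from your decomposition) that the ``remaining'' boundary is a single connected component: dualizing, $M_0$ is obtained from a collar of that remaining boundary by attaching only $2$- and $3$-handles, neither of which joins components, so if it were disconnected $M_0$ would be too. Second, your fix for the no-sphere restriction on $F$ does not actually work. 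If $S$ is a sphere component of $\partial_- M_0$, then $S$ is a genuine boundary component of $M_0$; gluing a $0$-handle to the interior end of the collar $S\times[0,\epsilon]$ produces a ball whose boundary is $S$ itself, which cannot be ``absorbed'' into the interior of the decomposition. Worse, there may be no $0$-handle at all: $S^2\times[0,1]$ with the projection to $[0,1]$ is a proper Morse function with no critical points, satisfies the hypotheses, and is not a compression body under the paper's definition of that term. So the issue you flagged is real, but it is a (minor) defect of the lemma as stated rather than something your pairing trick repairs; in the paper's actual applications the sublevel sets are handlebodies or have no spherical boundary, so the case never arises.
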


The lemma can be deduced from the classification of Morse singularities described above.  The details are left to the reader.  The equivalent lemma for index two and three critical points holds for the same reasons.  

If a component of $M$ has connected boundary then this component is in fact a handlebody.  In this case, if there are $n$ index zero critical points and $m$ index one critical points in the component then the handlebody has genus $m - n + 1$.  (Again, details are left to the reader.)

Given a Morse function $f$ on a 3-manifold $M$, assume there is a value $b \in \mathbf{R}$ such that every index zero or one critical point is below $b$, while every index two or three critical point is above $b$.  The submanifold $H_1 = f^{-1}((-\infty,b])$ contains only index zero and one critical points and is thus a compression body.  Similarly, $H_2 = f^{-1}([b,\infty))$ is a second compression body.  If we define $\Sigma = f^{-1}(b)$ then we have $\Sigma = \partial_+ H_1 = \partial_+ H_2$ so $(\Sigma, H_1, H_2)$ is a Heegaard splitting for $M$.

Conversely to Lemma~\ref{morsehandlem}, given a Heegaard splitting $(\Sigma,H_1,H_2)$ of a manifold $M$, one can construct a Morse function on each handlebody consisting of only index zero and one critical points or only index two and three critical points, respectively.  Moreover, one can construct these function such that they agree on $\Sigma$, inducing a Morse function on $M$ in which $\Sigma$ is a level set.

An arbitrary Morse function on $M$ will not have this important property that there is a regular level separating the high index critical points from the low index critical point.  However, for such a function, one can choose a finite set of levels $\ell_1,\dots,\ell_{2n+1}$ such that for each $i$, the interval $[\ell_{2i-1},\ell_{2i}]$ contains only index zero and one critical points and each interval $[\ell_{2i},\ell_{2i+1}]$ contains only index index two and three critical points.  

The surfaces $\bigcup f^{-1}(\ell_i)$ cut $M$ into a collection of compression bodies, defining a structure called a \textit{generalized Heegaard splitting}.  Schultens~\cite{schultens} showed that such a structure can be turned into a unique (up to isotopy) Heegaard splitting by a process called \textit{amalgamation}.  We will not review the construction here.  The key is that a Morse function determines a unique isotopy class of generalized Heegaard splittings, which in turn determines a unique isotopy class of Heegaard splittings.  This we have the following:

\begin{Lem}
\label{morseheedlem}
Every Morse function on $M$ determines a unique (up to isotopy) Heegaard splitting $(\Sigma, H_1, H_2)$ on $M$.  If $M$ is closed then the genus of $\Sigma$ is $m - n + 1$ where $m$ is the number of index one critical points and $n$ is the number of index zero critical points.
\end{Lem}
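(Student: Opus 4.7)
The plan is to formalize the construction sketched in the two paragraphs immediately preceding the lemma. Since $M$ is compact and $f$ is Morse, the set of critical values is finite, so one can choose regular values $\ell_1 < \ell_2 < \cdots < \ell_{2k+1}$ flanking all critical values so that each interval $[\ell_{2i-1},\ell_{2i}]$ contains only critical values of index $0$ and $1$, and each interval $[\ell_{2i},\ell_{2i+1}]$ contains only critical values of index $2$ and $3$. Applying Lemma~\ref{morsehandlem} to $f$ on $f^{-1}[\ell_{2i-1},\ell_{2i}]$ and to $-f$ on $f^{-1}[\ell_{2i},\ell_{2i+1}]$ shows each of these preimages is a disjoint union of compression bodies. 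The collection of level surfaces $f^{-1}(\ell_i)$ therefore defines a generalized Heegaard splitting of $M$, and Schultens's amalgamation theorem (cited above) converts it into an honest Heegaard splitting.

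For the uniqueness assertion, I would argue that the isotopy class of the generalized Heegaard splitting is independent of the choice of separating levels. If $\{\ell_i\}$ and $\{\ell_i'\}$ are two admissible choices, each $\ell_i$ can be slid through its open interval of regular values to the corresponding $\ell_i'$. This path of choices gives an ambient isotopy of the cutting surfaces, and so an isotopy of the compression-body decomposition. Combined with the isotopy-invariance built into Schultens's amalgamation, this shows that the induced Heegaard splitting depends only on $f$ up to isotopy.

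For the genus formula when $M$ is closed, I would reduce to the situation described by Lemma~\ref{morsehandlem} using the classical Morse-theoretic rearrangement lemma: one may isotope $f$ through Morse functions (moving critical values past regular values but never creating or destroying critical points) until every index $0$ and $1$ critical point lies below every index $2$ and $3$ critical point. Such a path induces an ambient isotopy of the associated generalized splittings and so leaves the amalgamation unchanged; it also preserves the counts $m = n_1$ and $n = n_0$. After rearrangement, a single separating regular level $c$ exhibits $M$ as the union of a compression-body-like piece below and one above, and the genus computation noted right after Lemma~\ref{morsehandlem} gives $\mathrm{genus}(\Sigma) = m - n + 1$.

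The main obstacle is the subcase in which the separating level surface $f^{-1}(c)$ is disconnected: then the ``below'' piece is a disjoint union of handlebodies, and one must check that amalgamation across the thin surfaces combines their genera with the corresponding upper-side genera so as to reproduce the expected formula. This is a bookkeeping exercise using the handle-counting formula for compression bodies together with the connectedness of $M$, and is the only nontrivial calculation in the proof; everything else follows essentially immediately from the material developed in the section.
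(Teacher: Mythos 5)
Your plan follows the paper's own treatment: the paper offers no formal proof of this lemma, only the preceding discussion (choose regular levels $\ell_1,\dots,\ell_{2k+1}$ flanking the critical values, cut along $f^{-1}(\ell_i)$ to get a generalized Heegaard splitting by Lemma~\ref{morsehandlem}, then invoke Schultens's amalgamation), and your proposal formalizes exactly those steps. One detail worth tightening in your uniqueness paragraph: sliding each $\ell_i$ to $\ell_i'$ through regular values presupposes the two admissible collections have the same cardinality and induce the same partition of the critical values; when one collection inserts a redundant pair of levels (producing a product slice containing no critical points), you must also observe that amalgamation collapses such a trivial slice, so insertion or deletion of redundant levels leaves the amalgamated surface unchanged. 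Similarly, the rearrangement used for the genus formula momentarily passes through non-Morse functions (two critical values coinciding), so the claim that this ``leaves the amalgamation unchanged'' should be justified by noting that pushing an index-$\{0,1\}$ point below an index-$\{2,3\}$ point is precisely one of the moves amalgamation absorbs. These are small gaps, consistent with the paper's own decision to cite Schultens and leave details to the reader.
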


We will restrict our attention to the closed case so the genus of the induced Heegaard splitting will always be $m - n + 1$.  

A \textit{spine} for a handlebody $H$ is a graph $K \subset H$ such that the complement $H \setminus K$ is homeomorphic to $\partial H \times (0,1]$.  If $H$ is embedded in a 3-manifold then $H$ is isotopic to a regular neighborhood of $K$.  Thus $H$ is determined, up to isotopy, entirely by its spine.  If $H$ is a handlebody in a Heegaard splitting then the Heegaard surface (the boundary of $H$) is determined by $K$, so the entire Heegaard splitting is determined, up to isotopy, entirely by a spine for one of its handlebodies.  Thus we will try to understand the Heegaard splitting induced by a Morse function by constructing a spine for one of the handlebodies.

For an index one critical point $p$ of a Morse function $f$, a \textit{descending arc} is an arc $\alpha : [0,1] \rightarrow M$ such that $\alpha(0) = p$, $\alpha(1)$ is an index zero critical point and the function $f \circ \alpha : [0,1] \rightarrow \mathbf{R}$ is monotonically decreasing.  

For each of the index one critical points of $f$, there are a number of different descending arcs.  We will pick a pair of descending arcs for each critical point that approach the critical points from opposite directions.  Let $K$ be the union of these pairs of descending arcs.  We will call this graph $K$ a \textit{descending spine}.  There are many descending spines for a Morse function, however the construction of an amalgamation implies the following connection between a descending spine and the induced Heegaard splitting.  Details of the proof are left to the reader.

\begin{Lem}
\label{kisaspinelem}
The graph $K$ is isotopic to the spine of a handlebody in the Heegaard splitting defined by $f$.
\end{Lem}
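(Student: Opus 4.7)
My approach is to verify the claim directly in the ``ordered'' case --- where all index $0$ and $1$ critical points of $f$ lie below all index $2$ and $3$ critical points --- and then reduce the general case to this one using the amalgamation construction. In the ordered case, for a separating regular value $b$ the submanifold $H_1 = f^{-1}((-\infty, b])$ is a handlebody built from a $0$-handle at each index $0$ critical point and a $1$-handle at each index $1$ critical point, by Lemma~\ref{morsehandlem}. The pair of descending arcs from an index $1$ critical point $p$ is precisely the core of the corresponding $1$-handle: it is a sub-arc of the unstable manifold of $p$ under $-\nabla f$, which leaves $p$ in two opposite directions, passes through the $1$-handle, and terminates at index $0$ critical points (the $0$-handle cores). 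Thus $K$ is exactly the $1$-skeleton of this handle decomposition, and since any union of $0$- and $1$-handles deformation retracts onto its $1$-skeleton, $K$ is a spine of $H_1$.

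For a general Morse function, use regular values $\ell_1 < \cdots < \ell_{2n+1}$ as in Section~\ref{morsesect} to decompose $M$ into low-index compression bodies $M_i = f^{-1}([\ell_{2i-1}, \ell_{2i}])$ and high-index compression bodies $M'_i = f^{-1}([\ell_{2i}, \ell_{2i+1}])$, giving the generalized Heegaard splitting whose amalgamation (after Schultens~\cite{schultens}) is the Heegaard splitting $(\Sigma, H_1, H_2)$ of interest. Applying the ordered-case argument locally inside each $M_i$ shows that those descending arcs of $f$ running from an index $1$ critical point of $M_i$ to an index $0$ critical point of $M_i$, together with $\partial_- M_i$, form a compression-body spine of $M_i$. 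The plan is then to exhibit $K$ as a spine of the amalgamated handlebody $H_1$: in Schultens' construction, $H_1$ is assembled from the $M_i$'s by attaching tubes through each intermediate $M'_j$, and a spine for $H_1$ is obtained by joining the local spines of the $M_i$ along the cores of these tubes. The descending arcs of $f$ that leave their containing $M_i$ downward through $\partial_- M_i$, traverse the intermediate high region(s), and terminate at an index $0$ critical point in some lower $M_j$ provide a natural choice of tube core, and with this choice the amalgamated spine coincides (up to isotopy) with $K$.

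The principal obstacle is the bookkeeping in the second step: verifying that Schultens' amalgamation admits such a choice of tube cores --- i.e., that the ``crossing'' descending arcs can serve as tube cores and that the local-spine pieces inside each $M_i$ assemble correctly through these tubes to give a spine of $H_1$. One must also handle the degenerate case where some $M_i$ has no index $0$ critical points (so every index $1$ critical point in $M_i$ has both of its descending arcs crossing $\partial_- M_i$), in which the local spine is simply $\partial_- M_i$ together with arcs running through the $1$-handles. This is a direct but somewhat tedious check using the handle-theoretic description of amalgamation, and is of the sort the author defers to the reader.
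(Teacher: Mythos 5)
The paper does not actually supply a proof of Lemma~\ref{kisaspinelem}; it states only that ``the construction of an amalgamation implies the following connection'' and leaves the details to the reader, so there is no argument in the text against which to check the proposal line by line. Your sketch follows precisely the route the paper indicates: the ordered case reduces to the standard fact that the union of $0$-handle cores and $1$-handle cores is a spine of the handle decomposition of $H_1$, and the general case is addressed by appealing to Schultens' amalgamation. That much is consistent with the author's intent and your handling of the ordered case is correct.

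The place where I would push back is the sentence asserting that ``a spine for $H_1$ is obtained by joining the local spines of the $M_i$ along the cores of these tubes.'' You correctly observe that a spine of the compression body $M_i$ (for $i > 1$, when $\partial_- M_i \neq \emptyset$) must include the closed surface $\partial_- M_i$, not just a graph. But then the assembled object you describe is a $2$-complex containing several closed surfaces, whereas $K$ is a graph, and indeed the spine of the amalgamated handlebody $H_1$ (a genuine handlebody, since $M$ is closed) must be a graph. The amalgamation does not simply union up the $M_i$'s and their compression-body spines; the product regions $\partial_- M_i \times I$ are moved onto the $H_2$ side during the amalgamation, and only the $0$- and $1$-handles of each $M_i$, connected through tubes, remain in $H_1$. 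Accounting for this is where the content of the lemma actually lies, and your sketch treats it as routine bookkeeping when in fact it is the conceptual step that makes the assembled spine a graph rather than a surface-graph complex. You do flag the assembly as the main obstacle, which is fair, but I'd flag specifically that the claim ``$H_1$ is the $M_i$'s joined by tubes, with spine the union of the local compression-body spines and tube cores'' is not literally correct as stated and needs to be reformulated before the deferred bookkeeping can even begin.

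One smaller point: you should also address connectivity. For $K$ to be a spine of the handlebody $H_1$ it must be connected, and this is not immediate from the definition of a descending spine (a priori the descending arcs from different index-one critical points could terminate at index-zero critical points in disjoint components). Connectivity of $K$ follows from connectivity of $H_1$ once the spine identification is made, but the logic has to be run in the right order, and your reduction should make this explicit.
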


\section{The Rubinstein-Scharlemann Graphic}
\label{graphicsect}

Let $\phi, \psi : X \rightarrow Y$ be smooth maps between smooth manifolds.  We will say that $\phi$ and $\psi$ are \textit{isotopic} if there are automorphisms $A_X : X \rightarrow X$ and $A_Y : Y \rightarrow Y$, each isotopic to the identity on its respective space, such that $\phi = A_Y \circ \psi \circ A_X$.  The function $\phi$ will be called \textit{stable} if there is an open neighborhood $N \subset C^\infty(X,Y)$ (under the Whitney $C^\infty$ topology, see~\cite{golub} or~\cite{dupl}) around $\phi$ such that every map in $N$ is isotopic to $\phi$.  In other words, small perturbations of a stable map do not change its topology.

If a map $\psi$ is isotopic to a stable map $\phi$ then the isotopies induce an automorphism of $C^\infty(X,Y)$, preserving the norm, so $\psi$ is also stable.  Thus the open ball $N$ consists of stable maps, implying that the set of stable maps in $C^\infty(X,Y)$ is an open set.  If two stable maps are connected by an arc $\alpha$ of stable maps then $\alpha$ is covered by a finite collection of open sets such that any two maps in each set are isotopic.  By induction, any two maps in $\alpha$ are isotopic.  Thus each path component of the set of stable maps represents a single homeomorphism/isotopy class of smooth maps.

A stable map from a manifold to $\mathbf{R}$ is simply a Morse function.  Let $f$ and $g$ be Morse function on $M$.  The product of $f$ and $g$ is a map $F = f \times g : M \rightarrow \mathbf{R}^2$.  (Define $F(x,y) = (f \times g)(x,y) = (f(x),f(y))$.)  We can recover $f$ and $g$ from $F$ by projecting onto the horizontal and vertical axes of $\mathbf{R}^2$.  In other words, $f = p_x \circ F$ where $p_x$ is the orthogonal projection map from $\mathbf{R}^2$ onto $\mathbf{R} \times \{0\}$ and $g = p_y \circ F$ where $p_y$ is the orthogonal projection map from $\mathbf{R}^2$ onto $\{0\} \times \mathbf{R}$.

For a compact, closed, orientable 3-manifold $M$, Mather~\cite{mather} showed that the set of stable maps in $C^\infty(M, \mathbf{R}^2)$ is a dense set.  (He also showed this for a number of other dimensions.)  Thus any open neighborhood of $f \times g$ contains a stable function.  

The projection maps $p_x$ and $p_y$ define continuous maps from $C^\infty(M, \mathbf{R}^2)$ to $C^\infty(M, \mathbf{R})$ (See~\cite[Ch. 2, Proposition 3.5]{golub}.   Because $f$ is stable (Morse), there is an open neighborhood $N_f$ of $f$ in $C^\infty(M,\mathbf{R})$ in which all the functions are isotopic to $f$.  The pre-image of $N_f$ in the map induced by $p_x$ is an open set in $C^\infty(M,\mathbf{R}^2)$.  Similarly, the pre-image of an open neighborhood of $g$ is an open set in $C^\infty(M, \mathbf{R}^2)$.  The intersection of these two open sets is open so the intersection contains a stable map $F'$.

The composition of $F'$ with the projection $p_x$ is a function $f'$ isotopic to $f$.  The composition of $F'$ with $p_y$ is a function $g'$ isotopic to $g$.  Thus $F' = f' \times g'$ where $f'$ and $g'$ are (Morse) functions isotopic to $f$ and $g$, respectively.  If we isotope $f$ and $g$ to $f'$ and $g'$ then the product of $f$ and $g$ will be a stable map.  In other words, we have proved the following:

\begin{Lem}
If $f$ and $g$ are Morse functions then after arbitrarily small isotopies of $f$ and $g$, the product $F = f \times g$ will be a stable map.
\end{Lem}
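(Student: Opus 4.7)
The plan is to play Mather's density theorem for stable maps in $C^\infty(M,\mathbf{R}^2)$ against the openness of the Morse isotopy class in $C^\infty(M,\mathbf{R})$, linked by the continuity of composition with the two coordinate projections $p_x, p_y$. The output will be a stable map $F'$ so close to $f \times g$ that it automatically factors as a product $f' \times g'$ whose two factors are Morse functions in the isotopy classes of $f$ and $g$.

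First I would fix arbitrarily small Whitney $C^\infty$ neighborhoods $N_f$ and $N_g$ of $f$ and $g$ in $C^\infty(M,\mathbf{R})$ consisting entirely of functions isotopic to $f$ and $g$ respectively; such neighborhoods exist because $f$ and $g$ are themselves stable, by definition of stability. Next, using that the maps $F \mapsto p_x \circ F$ and $F \mapsto p_y \circ F$ from $C^\infty(M,\mathbf{R}^2)$ to $C^\infty(M,\mathbf{R})$ are continuous, I would pull $N_f$ and $N_g$ back to open sets in $C^\infty(M,\mathbf{R}^2)$ whose intersection $U$ is an open neighborhood of $f \times g$. By Mather's density theorem, $U$ contains some stable map $F'$. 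Setting $f' := p_x \circ F'$ and $g' := p_y \circ F'$, the construction guarantees $f' \in N_f$, $g' \in N_g$, and $F' = f' \times g'$; so $f'$ is isotopic to $f$, $g'$ is isotopic to $g$, and their product is stable. Because $N_f$ and $N_g$ were arbitrary, the isotopies from $f$ to $f'$ and from $g$ to $g'$ can be made as small as desired.

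The only delicate point — and it is modest — is checking the two background facts that are being black-boxed: Mather's theorem that stable maps are dense in $C^\infty(M,\mathbf{R}^2)$ (cited to Mather), and the continuity of composition with a projection in the Whitney topology (cited to Golubitsky--Guillemin). With those inputs in hand the argument is purely soft-analytic; no 3-manifold topology enters. In particular there is no need to produce the isotoped functions by an explicit perturbation of critical values or positions, since the needed smallness is packaged into the choice of $N_f$ and $N_g$.
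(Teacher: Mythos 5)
Your proof is correct and follows the same route the paper takes: fix neighborhoods $N_f$, $N_g$ of the stable maps $f$, $g$, pull them back under the continuous post-composition maps $F \mapsto p_x \circ F$ and $F \mapsto p_y \circ F$ to get an open $U \ni f \times g$ in $C^\infty(M,\mathbf{R}^2)$, and invoke Mather's density theorem to find a stable $F' = (p_x \circ F') \times (p_y \circ F') \in U$ with coordinate functions isotopic to $f$ and $g$. The only cosmetic difference is presentational; the key ingredients (Mather density, continuity of projection in the Whitney topology, openness of the Morse isotopy class) are exactly the ones the paper uses.
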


Assume $F = f \times g$ is a stable map.  The \textit{Jacobi set} or \textit{discriminant set} $\mathcal{J}$ of $F$ is the set of points $p \in M$ where the discriminant map $T_p M \rightarrow \mathbf{R}^2$ has a two dimensional kernel.  (At the remaining points, this map will have a one dimensional kernel.)  

In terms of $f$ and $g$, $\mathcal{J}$ is the set of points $p$ where the gradients of $f$ and $g$ are linearly dependent in $T_p M$.  In other words, the gradients at $p$ are parallel or one of the gradients is zero.  (In the latter case, $p$ is a critical point of $f$ or $g$.)  Equivalently, $\mathcal{J}$ is the set of critical points of $f$ and $g$ and points in $M$ where the level surfaces of $f$ and $g$ are tangent.  The image in $f \times g$ of $\mathcal{J}$ is a one dimensional set in $\mathbf{R}^2$ which we will call the \textit{graphic}.  We will think of the graphic as drawn so that $f$ increases from left to right, while $g$ increases from bottom to top.

Rubinstein and Scharlemann defined the graphic slightly differently, beginning with sweep-outs rather than Morse functions.  From their point of view, a sweep-out is a family of parallel surfaces that fill a manifold, expanding out of one spine of the Heegaard splitting and collapsing onto the other.  By employing results of Cerf~\cite{cerf:strat}, they define a general position for two sweep-outs and define the graphic as the set of points where leaves of the two sweep-outs are tangent.

Our third description of the graphic given above as the image in $\mathbf{R}^2$ of the points where level surfaces are tangent should seem very reminiscent of  Rubinstein and Scharlemann's definition.  This is intentional.  Kobayashi and Saeki~\cite{Kob:disc} showed that by thinking of a sweep-out as a function from $M$ to $\mathbf{R}$ (the family of surfaces become level sets of this function), Rubinstein and Scharlemann's definition can be thought of as the image of the discriminant of a stable function on the complement in $M$ of the spines.  Our viewpoint takes this one step further, replacing the sweep-out functions with Morse functions and producing a stable function on all of $M$.

Above, we used the fact that we can recover $f$ and $g$ from the stable function $F$ by composing $F$ with projections onto the horizontal and vertical axes, respectively.  We can construct other functions by composing $f$ with projection onto an arbitrary line $L$ through the origin in $\mathbf{R}^2$.  Such a projection can be written as a linear combination $p_L = a p_x + b p_y$ where $a,b \in \mathbf{R}$ are determined by the slope of $L$.  The composition of $F$ with this linear combination of $p_x$ and $p_y$ is in turn a linear combination of $f$ and $g$, namely $a f + b g$.  

If we think of $f$ and $g$ as points in the vector space $C^\infty(M,\mathbf{R})$ then the different projections of $F$ determine points of the plane in $C^\infty(M,\mathbf{R})$ spanned by vectors $f$ and $g$.  If we choose the projections given by coefficients $a = \sin(t)$, $b = \cos(t)$ then the family of projections determine the arc $\alpha = \{\sin(t) f + \cos(t) g\ |\ t \in [0,\frac{\pi}{2}]\}$ from $f$ to $g$ in $C^\infty(M, \mathbf{R})$.

The intersection of $\alpha$ with the set of Morse functions in $C^\infty(M, \mathbf{R})$ is open in $\alpha$ because the set of Morse functions is open in $C^\infty(M, \mathbf{R})$.  Each component of the intersection determines a single isotopy class of Morse functions so if there are finite number of components then the arc $\alpha$ determines a finite sequence of Heegaard splittings.  We will see that ``generically'', the number of components is in fact finite.

In order to prove Theorem~\ref{mainthm}, we must show two things:  first, that the sequence of Heegaard splittings determined by $\alpha$ is a sequence of single stabilizations and destabilizations and second, that the number of stabilizations and destabilizations in this sequence is bounded by the number of negative slope inflection points in the graphic.  In order to do both of these, we must understand how the topology of the graphic corresponds to the topology of the Morse functions determined by projections of $F$.

\section{The Reeb Complex and the Stein Filtration}
\label{reebsect}
Our main tool for interpreting the graphic will be a 2-complex through which we will filter the map $f \times g : M \rightarrow \mathbf{R}^2$.  However, before introducing this complex we will introduce a related tool one dimension lower.

Given a compact, closed, orientable surface $\Sigma$, let $f : \Sigma \rightarrow \mathbf{R}$ be a Morse function on $\Sigma$.  Define the equivalence relation $\sim$ on points in $\Sigma$ by $x \sim y$ whenever $x,y \in M$ are in the same component of a level set of $f$.  The \textit{Reeb graph} is the quotient of $\Sigma$ by the relation $\sim$.  

As suggested by the name, the Reeb graph $G = \Sigma / \sim$ is a graph.  The edges of $G$ come from annuli in $\Sigma$ fibered by level loops.  The vertices correspond to critical points of $f$, with valence one vertices corresponding to central singularities and valence three vertices corresponding to saddle singularities, as in Figure~\ref{locreebfig}.  A simple Euler characteristic argument shows that the rank of the fundamental group of $G$ is equal to the genus of $\Sigma$.  (In fact, $G$ is isomorphic to a spine for a handlebody bounded by $\Sigma$.)
\begin{figure}[htb]
  \begin{center}
  \includegraphics[width=2in]{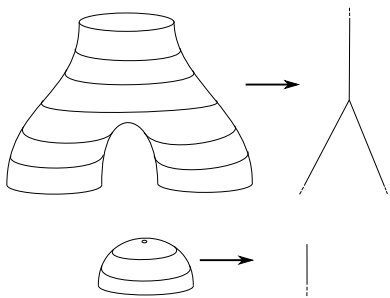}
  \caption{In the Reeb graph, valence three vertices correspond to saddle singularities and valence one vertices correspond to central singularities.}
  \label{locreebfig}
  \end{center}
\end{figure}

There is a map from $\Sigma$ to $G$ induced by the quotient.  The pre-image in the quotient map of each point in $G$ is a component of a level set, so there is also an induced map from $G$ to $\mathbf{R}$.  The composition of these two maps $\Sigma \rightarrow G \rightarrow \mathbf{R}$ is precisely $f$.  Although the quotient in the two dimensional case has the name Reeb attached to it, in general the method of defining a quotient space and writing a stable map as a composition of two maps in this way is called a \textit{Stein factorization} (See~\cite{shadows}).  In an attempt to avoid the politics of such a naming conflict, we will attach the name Reeb to the quotient space, and attach the name Stein to the pair of maps whose composition is the original stable map.

Given a compact, closed, orientable 3-manifold $M$ and a stable function $F : M \rightarrow \mathbf{R}^2$, define the equivalence relation $\sim$ on $M$ by $x \sim y$ whenever $x,y \in M$ are in the same component of a pre-image of a point in $\mathbf{R}^2$.  As in the two dimensional case, there is a Stein filtration from $M$ to the \textit{Reeb complex} $C = M / \sim$ and from $C$ to $\mathbf{R}^2$ such that the composition $M \rightarrow C \rightarrow \mathbf{R}^2$ is $F$.

Let $D \subset \mathbf{R}^2$ be an open disk disjoint from the image of the discriminant set $\mathcal{J}$ in $\mathbf{R}^2$ (i.e. the graphic).  The pre-image in $M$ of $D$ is a collection of solid tori such that level sets of $F$ foliate these solid tori by longitudes.  Quotienting $f^{-1}(D)$ by $\sim$ sends each solid torus to an open disk in $C$.  (The map from $C$ to $\mathbf{R}^2$ is one-to-one on each disk.)  

Thus a large portion of $C$ consists entirely of disks.  We would like to show that $C$ is in fact homeomorphic to a two dimensional cell complex.  To do this, we must examine the local structure of $C$ near $K$.  Mather's~\cite{mather} classification of critical points of stable maps into $\mathbf{R}^2$ (See also~\cite{levine}) implies the following:

\begin{Thm}[Mather]
\label{critsclassthm}
If $F : M \rightarrow \mathbf{R}^2$ is a stable map (where $M$ is a closed, orientable 3-manifold) then at any critical point $p \in M$, there is an open neighborhood of $p$ that can be parametrized with coordinates $u,x,y$ so that for some parametrization of $\mathbf{R}^2$, $F(u,x,y)$ has one of the following forms:

(1) $F(u,x,y) = (u, x^2 + y^2)$ ($p$ a definite fold point),

(2) $F(u,x,y) = (u, x^2 - y^2)$ ($p$ an indefinite fold point),

(3) $F(u,x,y) = (u,y^2 + ux - \frac{x^3}{3})$ ($p$ a cusp point).

Moreover, no cusp point is a double point of the map from $\mathcal{J}$ to $\mathbf{R}^2$ and on the complement of the cusps, the map from $\mathcal{J}$ to $\mathbf{R}^2$ is an immersion with normal crossings.
\end{Thm}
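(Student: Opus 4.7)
The plan is to deduce this from Mather's general theory of stable maps. Mather showed that a smooth map is stable if and only if it is infinitesimally stable, and that the pair of dimensions $(n,p) = (3,2)$ lies in his ``nice dimensions'', so that stable maps admit a complete local classification. My approach is to first identify the possible local behaviors at a critical point using the Thom--Boardman stratification of the jet bundle, and then extract the stated normal forms from the Malgrange preparation theorem together with the parametrized Morse lemma.

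Concretely, at a critical point $p$ the rank of $dF_p$ is at most one. Corank two would force $j^1F$ to meet a stratum of codimension six in $J^1(M,\mathbf{R}^2)$, and since stability makes $j^1F$ transverse to every Thom--Boardman stratum this cannot happen when $\dim M = 3$. Hence $j^1F(p)$ lies on the corank-one stratum $\Sigma^1$, of codimension two, and $\mathcal{J} = (j^1F)^{-1}(\Sigma^1)$ is a smooth one-dimensional submanifold of $M$ on which the kernel of $dF$ is a smoothly varying two-plane field. The points where $F|_\mathcal{J}$ fails to be an immersion form the deeper stratum $\Sigma^{1,1}$, of codimension three, which transversality forces to be isolated; these are the cusps. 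Away from a cusp, the Hessian of the ``vertical'' coordinate of $F$ along the kernel of $dF_p$ is a non-degenerate quadratic form on a two-plane, with signature $(2,0)$ or $(1,1)$, and the parametrized Morse lemma in the rank-one direction along $\mathcal{J}$ produces the definite and indefinite fold normal forms $(u, x^2+y^2)$ and $(u, x^2-y^2)$. At a cusp, the kernel Hessian degenerates with a one-dimensional null line, and the Malgrange preparation theorem reduces the germ to a universal unfolding of $y \mapsto y^3$, which a standard change of coordinates puts in the Whitney cusp form $(u, y^2 + ux - x^3/3)$.

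For the final assertion about the image of $\mathcal{J}$ in $\mathbf{R}^2$, I would apply Mather's multi-jet transversality theorem on the two-fold multi-jet space over $M$. The forbidden configurations---a cusp coinciding in the image with another point of $\mathcal{J}$, and a self-intersection of the image of $\mathcal{J}$ that fails to be transverse---correspond to subvarieties of high enough codimension in the multi-jet space that the multi-jet prolongation of a stable $F$ must avoid them. The main obstacle, compared to simply citing Mather and Levine, is the preparation-theorem calculation producing the cubic normal form at a cusp; everything else is transversality bookkeeping, and I would follow the author in deferring those singularity-theoretic details to the original sources.
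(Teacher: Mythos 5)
The paper offers no proof of Theorem~\ref{critsclassthm}; it is stated as a black-box citation to Mather~\cite{mather} (and Levine~\cite{levine}), and the author explicitly defers to those sources. So there is nothing in the paper to compare your argument against step by step. What you have written is a road map of the singularity-theoretic proof that lives in those references, and as a road map it is sound: you correctly note that stability makes $j^1F$ transverse to the Thom--Boardman strata, that the corank-two stratum in $J^1(3,2)$ has codimension $2\cdot(3-2+2)=6>3$ and is therefore avoided, that $\Sigma^1$ has codimension $2$ (giving a one-dimensional $\mathcal{J}$) and $\Sigma^{1,1}$ codimension $3$ (giving isolated cusps), that the signature of the intrinsic quadratic form on the two-dimensional kernel distinguishes definite from indefinite folds via a parametrized Morse lemma, that the Malgrange preparation theorem produces the Whitney cusp unfolding, and that the ``moreover'' clause about the image of $\mathcal{J}$ is a multi-jet transversality statement.

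Two caveats worth noting. First, the substantive content---that stability implies jet transversality and that $(3,2)$ is in the nice dimensions so that stable equals infinitesimally stable and local normal forms exist---is itself the deep part of Mather's work, and you invoke it rather than prove it; this is appropriate for a result the paper also treats as external, but it means your argument is a proof sketch in the same sense the paper's citation is a proof sketch. Second, be a little careful with the Thom--Boardman bookkeeping: for $F:M^3\to\mathbf{R}^2$, the fibre of $dF$ has kernel of dimension at least one everywhere, so if one uses the ``$\Sigma^i$ = kernel dimension $i$'' convention the critical set is $\Sigma^2$, not $\Sigma^1$; you are evidently using the corank-in-the-target convention, which is fine, but the two conventions coexist in the literature and a reader could stumble. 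With that understood, your outline correctly identifies the key ingredients and the places where real work is needed, and you are right that the preparation-theorem computation at the cusp is the one step that cannot be dismissed as transversality bookkeeping.
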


For each type of critical point, we can think of $F$ as the product of two functions from $N$ to $\mathbf{R}$.  The discriminant set is the set of points where the gradients of the two functions agree.  In the models of all three critical points, the first function is simply $a(u,x,y) = u$.  Thus the gradient at each point is the vector $(1,0,0)$.  The gradient of the second function will be parallel to $(1,0,0)$ if and only if the derivatives in the $x$ and $y$ directions are zero.  Thus the discriminant set in the local patch $N$ is given by the equations $\frac{db}{dx} = 0$ and $\frac{db}{dy} = 0$ where $b(u,x,y)$ is $x^2 + y^2$, $x^2 - y^2$ or $y^2 + ux - \frac{x^3}{3}$, respectively.  Note that the functions $a$ and $b$ will not, in general, be equal to $f$ and $g$ because in order to get the form shown in the theorem, we must reparametrize $\mathbf{R}^2$.

At the first type of critical point, a definite fold point, the discriminant set intersects $N$ in the arc $\{(t, 0, 0)\}$ and maps to the arc $\{(t,0)\}$ in $\mathbf{R}^2$.  Locally, the pre-image of each point in $(t,0)$ is the single point $(t,0,0)$ in $N$.  The pre-image of a nearby point $(t,\epsilon)$ is a loop around $(t,0,0)$ as shown in Figure~\ref{deffoldfig}.  The quotient of $N$ by $\sim$ is a disk whose boundary consists of an arc in the image $K$ of $\mathcal{J}$ and an arc disjoint from $K$.  The induced map from this disk into $\mathbf{R}^2$ is one-to-one.
\begin{figure}[htb]
  \begin{center}
  \includegraphics[width=3.5in]{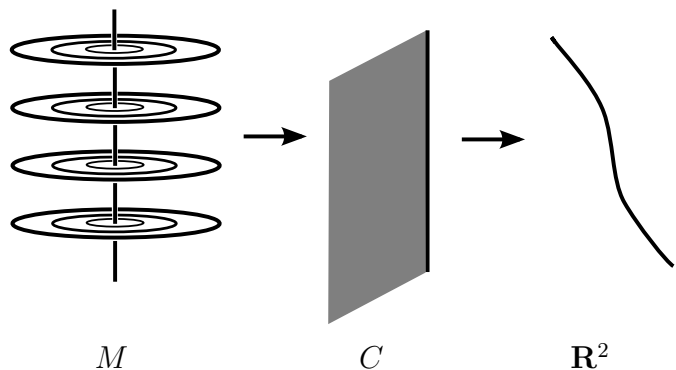}
  \put(-220,-15){$M$}
  \put(-120,-15){$C$}
  \put(-40,-15){$\mathbf{R}^2$}
  \caption{The local maps from $M$ to $C$ to $\mathbf{R}^2$ at a definite fold point.}
  \label{deffoldfig}
  \end{center}
\end{figure}

In order to put $F$ in the form shown in Theorem~\ref{critsclassthm} it is necessary to reparametrize $\mathbf{R}^2$.  Thus although the edge of $K$ maps to a vertical arc in $\mathbf{R}^2$ in this form, in general it will map to an arbitrary smooth arc.  However, the local structure of $C$ will be the same and the map from the disk neighborhood of the edge into $\mathbf{R}^2$ will be locally one-to-one.

At the second type of critical point, an indefinite fold, the discriminant set is again the vertical arc $\{(t,0,0)\}$ in $N$, whose image in $\mathbf{R}^2$ is the arc $\{(t,0)\}$.  The pre-image of a nearby point $(t,\epsilon)$ or $(t, -\epsilon)$ is a pair of arcs, each of which sits in a level loop of $F$.  In the quotient, these two arcs will map to separate points of $C$ if they are in different level loops, or the same point of $C$ if they are in the same level loop.  

If $(t,0)$ is not a double point in the graphic then the non-loop component of the preimage in $M$ of $(t,0)$ will be a figure eight, i.e. a graph with two edges and a single valence four vertex at $(t,0,0)$.  The boundary of a neighborhood of this figure eight consists of three loops.  Thus the arcs in $N$ that are the pre-image of $(t,\epsilon)$ will sit in the same level loop of $F$ if and only if the arcs in the pre-image of $(t,-\epsilon)$ are in different level loops.  This implies that if there are no double points in the image of $N \cap \mathcal{J}$ in the graphic then the image in $C$ of $N \cap \mathcal{J}$ is a valence three edge such that two faces enter the edge from one side and one face enters from the other side, as in Figure~\ref{indeffoldfig}.  Once again, the image of $N \cap \mathcal{J}$ will not, in general be a vertical arc.  It only appears vertical in the local reparametrization.
\begin{figure}[htb]
  \begin{center}
  \includegraphics[width=3.5in]{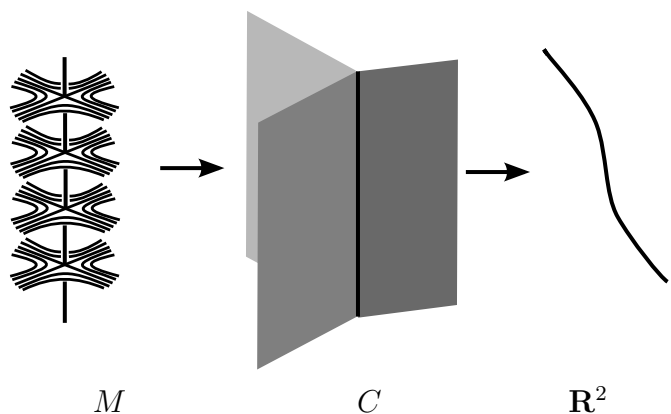}
  \put(-220,-15){$M$}
  \put(-120,-15){$C$}
  \put(-40,-15){$\mathbf{R}^2$}
  \caption{The local maps from $M$ to $C$ to $\mathbf{R}^2$ at an indefinite fold point.}
  \label{indeffoldfig}
  \end{center}
\end{figure}

If $N \cap \mathcal{J}$ contains a double point of the graphic then things are slightly more complicated.  There are a finite number of double points in the graphic so $N$ can be chosen so that its image in $\mathbf{R}^2$ contains exactly one of them.  The non-loop component or components of the pre-image of this double point in $M$ form a graph consisting of four edges and two valence four vertices.  If the graph is disconnected (i.e. consists of two figure eights) then the local behavior of $C$ is as in the non-double point case, but the arcs of $\mathcal{J}$ that contain the two critical points are sent to arcs in $\mathbf{R}^2$ that happen to cross.  Bachman and Schleimer~\cite{bachmanschleimer} call this an \textit{unentangled crossing}.

If the graph determined by the level set at the crossing is connected then the two arcs of $\mathcal{J}$ containing the critical points at the crossing are sent to arcs that cross in $C$ (as well as in $\mathbf{R}^2$).  The vertex in the graphic is called an \textit{entangled crossing}.  Note that the two arcs are still disjoint in $M$.  In $C$, we get a valence four vertex adjacent to six two-cells.  The possible ways that these six faces can come together at a vertex are not important for the proof, but can be worked out by the reader.

In the final type of critical point, a cusp point, the intersection of $\mathcal{J}$ with $N$ is given by the equations $u = x^2$, $y = 0$.  In order to understand the topology, we note that the level sets of $b$ are as in Figure~\ref{cuspfig}.  In $N$ these level sets sit on top of each other, but they are drawn in this way to avoid an overly confused picture.  The loops of intersection between the level sets of $b$ and those of $a$ are shown.
\begin{figure}[htb]
  \begin{center}
  \includegraphics[width=3.5in]{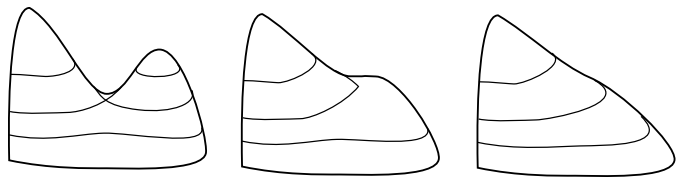}
  \caption{A saddle and central singularity cancel each other at a degenerate critical point, forming a cusp or birth/death vertex in the graphic.}
  \label{cuspfig}
  \end{center}
\end{figure}

The level set shown on the left is tangent to the level planes of $a$ at two points, which appear as a saddle and a central singularity in the surface.  These two points of tangency are points of $\mathcal{J}$.  As the level surfaces sweep through $N$, these tangent points form arcs of $\mathcal{J}$ that approach each other until they connect at the origin, shown in the middle surface in the figure.  After this, there are no tangencies between the level surfaces of $a$ and $b$.  Note that of the two arcs of $\mathcal{J}$ that meet at the cusp point, one is an edge of definite fold points and the other is an edge of indefinite fold points.  

Theorem~\ref{critsclassthm} states that we can assume the cusp is not a double point of the map from $\mathcal{J}$ into $\mathbf{R}$ so we can choose $N$ such that no double points of $\mathcal{J}$ are in $N$.  The local picture in $C$ and $\mathbf{R}^2$ is shown in Figure~\ref{cusp2fig}.
\begin{figure}[htb]
  \begin{center}
  \includegraphics[width=3.5in]{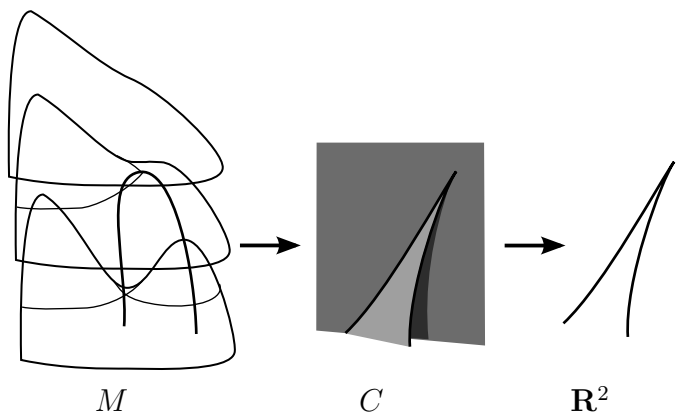}
  \put(-220,-15){$M$}
  \put(-120,-15){$C$}
  \put(-40,-15){$\mathbf{R}^2$}
  \caption{The local maps from $M$ to $C$ to $\mathbf{R}^2$ at a cusp.}
  \label{cusp2fig}
  \end{center}
\end{figure}

\section{Reading the graphic}
\label{linpathsect}

In order to find the sequence of stabilizations and destabilizations needed to prove Theorem~\ref{mainthm}, we would like to analyze the path of smooth functions defined by projecting a stable function $F \in C^\infty(M,\mathbf{R})$ orthogonally onto lines through the origin.  We can understand projections in general by looking at the projection onto a specific axis, then generalize to the others by ``rotating'' $F$ (i.e. composing $F$ with a rotation of the plane).  

We will consider the projection $p_y : \mathbf{R}^2 \rightarrow \mathbf{R}$ given by $p_y(x,y) = y$.  As noted above, if our stable function is defined as $F = f \times g$ then $p_y \circ F$ is the function $g$.  We would like to read information about $g$ from the graphic.  We will start with finding critical points.

\begin{Lem}
\label{horizcritlem}
If there are no horizontal tangents at cusps of the graphic $F(\mathcal{J}) \subset \mathbf{R}^2$ then there is a one-to-one correspondence between critical points of $f = p_x \circ F$ and points in the graphic at which there is a horizontal tangent.
\end{Lem}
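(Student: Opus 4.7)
The plan is to set up the correspondence by proving each direction separately, using Mather's classification (Theorem~\ref{critsclassthm}) to handle the local structure of $\mathcal{J}$ near fold points, and invoking the cusp hypothesis only to rule out exceptional behavior on the cuspidal part of the graphic.

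For the forward direction, I would first observe that any critical point $p$ of $f$ on $M$ lies automatically in $\mathcal{J}$: since $\nabla f(p) = 0$, the gradients of $f$ and $g$ are trivially linearly dependent at $p$, so $p \in \mathcal{J}$. Next, since $p$ is not a cusp (cusps are not critical points of $f$ in Mather's model $F(u,x,y) = (u, y^2 + ux - x^3/3)$, where the first coordinate $u$ is regular), I would parametrize $\mathcal{J}$ locally near $p$ by a smooth immersion $\alpha(t)$ and compute the tangent to the graphic at $F(p)$ as $(F\circ \alpha)'(0) = (df_p(\alpha'(0)),\, dg_p(\alpha'(0)))$. Because $df_p = 0$, the first coordinate vanishes, producing an axis-aligned tangent of the graphic corresponding to the horizontal direction in the paper's convention.

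For the reverse direction, let $q$ be a non-cusp point of the graphic with horizontal tangent and $p\in\mathcal{J}$ its preimage (unique in the non-double-point case; at an unentangled or entangled crossing the two branches are handled separately). By Theorem~\ref{critsclassthm}, near $p$ the map $F$ is modeled by a definite or indefinite fold, and in particular $dF_p$ has rank one, with its one-dimensional image containing the tangent line to the graphic at $q$. Horizontality of that tangent line forces the image of $dF_p$ to be horizontal in the original $(f,g)$ axes, which means the appropriate component of $dF_p$ vanishes; untangling Mather's reparametrization of $\mathbf{R}^2$, this translates to $\nabla f(p) = 0$, so $p$ is a critical point of $f$. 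Combining this with the forward direction gives the desired bijection.

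The cusp hypothesis enters to exclude spurious contributions from the cuspidal locus: at a cusp, the image in the graphic still has a well-defined tangent direction (the common direction of the two incoming smooth edges described in Section~\ref{reebsect}), but since a cusp is never a critical point of $f$, a horizontal cuspidal tangent would produce an extra "horizontal tangent point" with no matching critical point of $f$, destroying the bijection. The main obstacle in the argument is the bookkeeping around Mather's normal form: in the local model the graphic is tautologically horizontal, so one must carefully track how the local diffeomorphism of $\mathbf{R}^2$ used to achieve the normal form relates horizontality in the model to horizontality in the global $(f,g)$ axes, and verify that the gradient condition $\nabla f(p) = 0$ is preserved under this translation.
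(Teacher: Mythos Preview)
Your approach is essentially the paper's argument, split into separate forward and reverse directions. The paper argues uniformly: at every $p\in\mathcal{J}$ the differential $dF_p$ has one-dimensional image, and that image is precisely the tangent line to the graphic at $F(p)$; composing with the projection then kills this image exactly when the tangent is horizontal, which yields both directions of the correspondence at once. Your parametrization $(F\circ\alpha)'(0)$ in the forward step and your rank-one argument in the reverse step are just this same observation unwound.

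Your discussion of cusps, however, is backwards. You claim that a cusp point can never be a critical point of the projection, citing that the first coordinate $u$ in Mather's normal form is regular. But as you yourself flag at the end, that normal form is achieved only after a diffeomorphism of $\mathbf{R}^2$, so the first model coordinate is not $f$. In fact $dF_p$ still has rank one at a cusp, with image equal to the (well-defined) tangent direction of the cusp in the graphic; hence if that tangent were horizontal the cusp \emph{would} be a critical point of the projection. A horizontal cusp therefore does not produce a ``spurious horizontal tangent with no matching critical point'' as you suggest---it matches a genuine (degenerate) critical point. The hypothesis is there to keep the analysis away from this degenerate situation, not to repair an otherwise broken bijection. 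Correspondingly, the correct reason your critical point $p$ in the forward direction is not a cusp is simply the contrapositive of this: a critical point of the projection occurring at a cusp forces the cusp tangent to be horizontal, which the hypothesis rules out.
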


\begin{proof}
The stable function $F$ determines a map from the tangent space $T_p M$ at each point $p \in M$ to the tangent space $T_{F(p)} \mathbf{R}^2$ of a point in the plane.  For a point in the complement of the discriminant set $\mathcal{J}$, this map (a homomorphism of vector spaces) has two dimensional image and one dimensional kernel.  By definition, this map has a two or three dimensional kernel at each point in $\mathcal{J}$.

By the classification of singularities described in Theorem~\ref{critsclassthm}, the kernel at each point has dimension two, and therefore the image of $T_p(M)$ is a one dimensional subspace of $T_{F(p)} \mathbf{R}^2$.  In fact, the image is the one dimensional subspace tangent to the edge of the graphic at $p$.

The function $g$ is the composition of $F$ with the projection $p_x$ onto the vertical axis.  The critical points of $g$ are the points $p \in M$ where the map from $T_p M$ to $T_{g(p)}\mathbf{R}$ has zero dimensional image.  Thus $T_p M$ must be mapped into the kernel in $T_{F(p)} \mathbf{R}^2$ of the map into $T_{g(p)} \mathbf{R}$.  This is the case precisely when $T_p M$ is mapped to a line perpindicular to the vertical axis, i.e. when the tangent is horizontal.
\end{proof}

For a more intuitive explanation of Lemma~\ref{horizcritlem}, consider the local picture:  Note that the slope of an arc of the graphic is precisely the ratio of the lengths of the gradient vectors of $f$ and $g$ at the corresponding point in the discriminant set.  (The gradients are parallel so the ratio of their lengths is well defined.)  As an arc of the discriminant set passes through a critical point of $g$, the gradient of $g$ goes to zero, while the gradient of $f$ is non-zero at every point.  In the graphic, this means that the slope of the corresponding arc goes to zero, so there is a horizontal tangency.

Near a horizontal tangency of the graphic, the edge can be identified with the graph of a unique function from $\mathbf{R}$ to $\mathbf{R}$, allowing us to define the second derivative of the graph at that point as the second derivative of this function.

\begin{Lem}
\label{2nderivlem}
If a point $p \in \mathcal{J}$ is a critical point of $g$ and is not a cusp point of $F$ then $p$ is non-degenerate if and only if the second derivative of the image in $\mathbf{R}^2$ of the arc through $p$ is non-zero.
\end{Lem}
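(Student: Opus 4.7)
The plan is to work in the local normal form given by Theorem~\ref{critsclassthm}. Since $p$ is not a cusp, Mather's classification supplies source coordinates $(u,x,y)$ centered at $p$ and a local diffeomorphism $\psi$ of $\mathbf{R}^2$ centered at $F(p)$ such that $\psi \circ F(u,x,y) = (u,\, x^2 \pm y^2)$. Writing $\Psi = \psi^{-1} = (\Psi_1, \Psi_2)$ in the original $(X,Y) = (f,g)$ coordinates on $\mathbf{R}^2$, we have $f = \Psi_1(u,\, x^2 \pm y^2)$ and $g = \Psi_2(u,\, x^2 \pm y^2)$. The discriminant set is $\{x=y=0\}$, parametrized by $u$, and the graphic is the curve $u \mapsto (\Psi_1(u,0),\, \Psi_2(u,0))$.

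First I would differentiate $g$ directly to recover Lemma~\ref{horizcritlem} in coordinates: $\nabla g|_p = (\partial_A \Psi_2(u_0,0),\,0,\,0)$, so $p$ is critical for $g$ iff $\partial_A \Psi_2(u_0,0) = 0$. Assuming this, a short second-derivative calculation shows that the Hessian of $g$ at $p$ is diagonal, with entries $\partial_A^2 \Psi_2$, $2\partial_B \Psi_2$, and $\pm 2\partial_B \Psi_2$ (all evaluated at $(u_0,0)$). Since $\Psi$ is a diffeomorphism and $\partial_A \Psi_2(u_0,0) = 0$, non-singularity of its Jacobian forces $\partial_A \Psi_1(u_0,0) \neq 0$ and $\partial_B \Psi_2(u_0,0) \neq 0$. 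The determinant of the Hessian is therefore $\pm 4\, \partial_A^2 \Psi_2(u_0,0) \cdot \bigl(\partial_B \Psi_2(u_0,0)\bigr)^2$, so $p$ is non-degenerate precisely when $\partial_A^2 \Psi_2(u_0,0) \neq 0$.

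Next, because $X'(u_0) = \partial_A \Psi_1(u_0,0) \neq 0$, the graphic can be written locally as a graph $Y(X)$, and the standard parametric formula, combined with $Y'(u_0) = \partial_A \Psi_2(u_0,0) = 0$, gives
\[
\frac{d^2 Y}{d X^2}\bigg|_{u_0} \;=\; \frac{\partial_A^2 \Psi_2(u_0,0)}{(\partial_A \Psi_1(u_0,0))^2}.
\]
This is non-zero under exactly the same condition, establishing the equivalence.

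The principal pitfall is keeping track of the target reparametrization $\psi$: the graphic is not the straight line $\{(u,0)\}$ in the normal-form coordinates but its image under $\Psi$, so both the horizontal-tangent condition of Lemma~\ref{horizcritlem} and the second derivative of the graphic must be read through $\Psi$. Once this is sorted out, both sides of the equivalence collapse to non-vanishing of the single scalar $\partial_A^2 \Psi_2(u_0,0)$, and the proof is complete.
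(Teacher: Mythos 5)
Your proof is correct and takes the same approach the paper indicates: it uses the Mather normal form from Theorem~\ref{critsclassthm} to compute the determinant of the Hessian of $g$ at $p$ and compares it with the second derivative of the graphic. The paper explicitly leaves this calculation to the reader, and your argument (keeping track of the target reparametrization $\Psi = \psi^{-1}$, showing the Hessian is diagonal with determinant $\pm 4\,\partial_A^2\Psi_2\,(\partial_B\Psi_2)^2$, and identifying $\partial_A^2\Psi_2$ as the numerator of the parametric second-derivative formula for the graphic) is exactly the calculation intended.
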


The determinant of the Hessian at a critical point can be calculated from the models for points of $\mathcal{J}$ in Section~\ref{reebsect}.  The Lemma follows immediately from this.  We will leave this calculation to the reader and later present a more intuitive argument for why a degenerate critical point must appear at a horizontal inflection point.

We have so far carefully avoided analyzing the situation when the slope at a cusp is horizontal.  We will eventually deal with this case, but for now note that there are a finite number of cusps in $\mathcal{J}$ and therefore a finite number of slopes at which a cusp is horizontal.  This allows us to determine when the arc in $C^\infty(M,\mathbf{R})$ is generic, i.e. passes through a finite number of non-Morse functions.  We will say that a straight line in $\mathbf{R}^2$ is \textit{doubly tangent} to the graphic if it is tangent to the graphic at more than one point.

\begin{Lem}
\label{finitenonmorselem}
The path from $g$ to $f$ given by $\phi_t = cos(t) g + sin(t) f$ will pass through finitely many non-Morse functions if there are finitely many points in $\mathbf{R}^2$ at which the second derivative of the graphic $F(\mathcal{J})$ is zero and only finitely many straight lines in $\mathbf{R}^2$ are doubly tangent to the graphic.
\end{Lem}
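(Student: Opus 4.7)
The plan is to enumerate the ways $\phi_t$ can fail to be Morse and argue that each mode of failure corresponds to a finite discrete condition on $t$. First, observe that $\phi_t = \ell_t \circ F$, where $\ell_t : \mathbf{R}^2 \to \mathbf{R}$ is the linear map $\ell_t(x,y) = \sin(t) x + \cos(t) y$, i.e.\ the orthogonal projection onto the line through the origin in direction $(\sin t, \cos t)$. Composing the graphic with the rotation of $\mathbf{R}^2$ that sends this line to the vertical axis, Lemmas~\ref{horizcritlem} and~\ref{2nderivlem} apply verbatim with $g$ replaced by $\phi_t$: away from cusps, critical points of $\phi_t$ correspond to points of $F(\mathcal{J})$ whose tangent is perpendicular to $(\sin t, \cos t)$, and such a critical point is degenerate exactly when the corresponding second derivative (in the rotated frame) vanishes.

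A smooth function fails to be Morse precisely when (a) it has a degenerate critical point or (b) it has two distinct critical points at the same level. By Lemma~\ref{2nderivlem}, a non-cusp critical point of $\phi_t$ is degenerate iff the corresponding edge of the graphic has a horizontal inflection in the rotated coordinates, i.e.\ iff the critical point is an inflection point of the graphic whose tangent direction matches the required $(\cos t, -\sin t)$. Every inflection point has a unique tangent direction in $\mathbf{R}^2$, which in turn determines at most one $t \in [0, \pi/2]$, so the hypothesis of finitely many zero-second-derivative points yields only finitely many bad $t$ of this type. Cusps contribute (automatically degenerate) critical points of $\phi_t$ exactly at the $t$ for which the cusp tangent is perpendicular to $(\sin t, \cos t)$; since $\mathcal{J}$ has only finitely many cusps, this contributes only finitely many additional $t$.

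Condition (b) translates, after rotation, into two horizontal tangencies of the rotated graphic at the same height -- equivalently, a line in $\mathbf{R}^2$ perpendicular to $(\sin t, \cos t)$ that meets the original graphic tangentially at two or more points. This is by definition a doubly tangent line, and each such line has a fixed slope, hence determines a unique $t \in [0, \pi/2]$. By hypothesis, only finitely many doubly tangent lines exist.

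Combining, the set of $t \in [0, \pi/2]$ for which $\phi_t$ is non-Morse is a finite union of finite sets, so it is finite. The main technical point is the translation between condition (b) and double tangencies and the careful bookkeeping of cusps; both are routine consequences of the local models in Theorem~\ref{critsclassthm} together with the two preceding lemmas.
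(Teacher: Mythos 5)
Your proposal follows essentially the same strategy as the paper's proof: decompose the ways $\phi_t$ can fail to be Morse into (i) a degenerate non-cusp critical point, (ii) a critical point arising from a cusp, and (iii) two critical points at the same level, and observe that each gives a finite set of angles under the stated hypotheses, via the rotate-and-project interpretation and Lemmas~\ref{horizcritlem} and~\ref{2nderivlem}. The only small inaccuracy is your parenthetical that cusps give ``automatically degenerate'' critical points of $\phi_t$; as the paper later notes, a type one cusp may yield a non-degenerate critical point when horizontal, and the paper simply excludes all horizontal-cusp angles without committing to degeneracy -- but this does not affect the finiteness argument, so the proof is still correct.
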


\begin{proof}
The function $\phi_t$ is equal to the composition of $F$ with a projection onto an axis of $\mathbf{R}^2$ at angle $t$ clockwise from the vertical axis.  Equivalently, we can recover $\phi_t$ by rotating $F$ by angle $t$ counterclockwise, then composing with an orthogonal projection onto the vertical axis.  By Lemma~\ref{horizcritlem}, this composition (and therefore $\phi_t$) will be Morse if (1) there are no horizontal cusps, (2) at every horizontal tangency of each edge, the second derivative of the edge is non-zero and (3) any two horizontal tangencies project to distinct points in $\mathbf{R}$.

Because there are finitely many cusps, there are a finite number of rotation angles where there is a cusp with a horizontal tangency.  If there are only finitely many points in the edges of the graphic at which the second derivative is zero, then there will be a finite number of angles at which there is a horizontal inflection point.  Finally, if there are finitely many straight lines that are tangent to the graphic at two points then there will be a finite number of angles (given by the slopes of these lines) where two critical points are at the same level.  Thus if the assumptions of the Lemma are satisfied then there will be finitely many angles $t$ such that $\phi_t$ is non-Morse.
\end{proof}

To prove Theorem~\ref{mainthm}, we must show that when $t$ passes through the finite number of values for which $\phi_t$ is not Morse, the isotopy class of $\phi_t$ changes in a way that corresponds to a stabilization or destabilization in the induced Heegaard splitting at an inflection point or type two cusp, and does not change the Heegaard splitting otherwise.  

By Lemma~\ref{horizcritlem}, we can read the number of critical points (and therefore the genus) at each stage by looking at the number of horizontal tangencies.  As we rotate an inflection point or type two cusp, we see that the number of horizontal tangencies increases or decreases by two, as in Figure~\ref{rotinflectfig}.  Depending on the type of critical points that are created or removed, this either increases or decreases the genus by one or has no effect on the genus.  We must check that in the case when the genus changes, the new Heegaard splitting is a stabilization or destabilization of the original.

We will think of the Reeb complex as a union of Reeb graphs as follows:  The pre-image in the stable function $F$ of a generic line $\mathbf{R} \times \{y\} \subset \mathbf{R}^2$ is a (possible disconnected) surface $\Sigma_y$.  The restriction of $F$ to $\Sigma_y$ is a Morse function $f_y$.

The Reeb graph $R_y$ of $\Sigma_y$ is contained in the Reeb complex $C$ of $F$.  In particular, it is the preimage of $\mathbf{R} \times \{y\}$ in the map from $C$ to $\mathbf{R}^2$.  This is true for each $y \in \mathbf{R}$, so we can think of $C$ as the union of the Reeb graphs of the functions $f_y$.  If $F$ is the product $f \times g$ of Morse functions on $M$ then these horizontal slices of the Reeb complex are Reeb graphs for the restriction of $f$ to the level sets of $g$.

For a given $y \in \mathbf{R}$, the Euler characteristic of the surface $\Sigma_y$ is twice the Euler characteristic of the Reeb graph $R_y$ at $y$.  We can calculate this Euler characteristic as follows:  Each intersection of $\mathbf{R} \times \{y\}$ with an edge of definite fold points corresponds to a valence one vertex in $R_y$ and each intersection with an edge of indefinite fold points corresponds to a valence three vertex.

If $\mathbf{R} \times \{y\}$ intersects $n$ definite fold points and $m$ indefinite fold points then $R_y$ has $n + m$ vertices and $\frac{1}{2}n + \frac{3}{2}m$ edges so its Euler characteristic is $\frac{m - n}{2}$.  The Euler characteristic of $\Sigma_y$ is $m - n$.  As $t$ passes through a value where there is a horizontal tangent, the number of intersections with one type of edge increases or decreases by two.  

At a horizontal edge of definite fold points, two valence one vertices are added or removed.  Depending on whether the 2-cell is above or below the edge, this either adds or removes a sphere component of $\Sigma_y$ or increases the genus of a component.  At a horizontal edge of indefinite fold points, the genus of a component increases or decreases by one.  As we would hope, this is exactly the behavior of the level sets of a Morse function when they pass through a level containing an appropriate critical point.

We can now analyze how Morse functions induced by projecting $F$ onto different axes change at the non-generic angles, i.e. the angles at which there is a horizontal cusp or a horizontal inflection point.  We will look at a local model of each and consider how the level sets change as the non-generic point is rotated through a horizontal position.  Recall that there are three situations in which the function $\phi_t$ (constructed by rotating the graphic by angle $t$ and projecting onto the vertical axis) may be non-Morse: When there is a horizontal inflection point, when there are two horizontal tangencies at the same level and when there is a horizontal cusp.

If we rotate the local model of an inflection point through an angle where the inflection point is horizontal, we see that the number of horizontal tangencies either increases or decreases by two, as in Figure~\ref{rotinflectfig}.  There are eight cases to consider, defined by whether the second derivative changes from positive to negative or negative to positive, whether the edge is an edge of definite folds or indefinite folds, and whether there are more sheets of the Reeg complex above or below the edge.
\begin{figure}[htb]
  \begin{center}
  \includegraphics[width=3in]{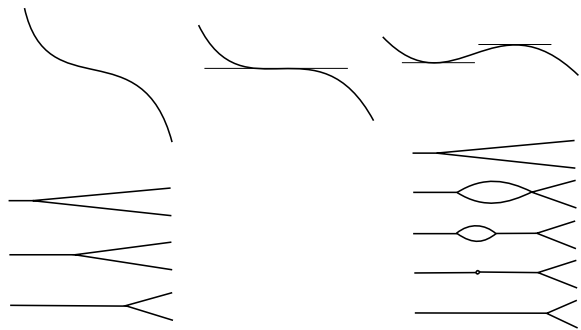}
  \caption{Rotating an inflection point through a horizontal position creates or removes two horizontal tangencies.  The Reeb graphs of horizontal slices before and after the rotation are shown below.}
  \label{rotinflectfig}
  \end{center}
\end{figure}

In the four cases when the edge consists of definite fold points, one of the critical points created or removed by the rotation has index zero or three and the second has index one or two, respectively.  Such a change to the Morse function does not change the isotopy class of the Heegaard splitting.  It changes the descending spine by adding or removing a trivial edge and a valence one vertex.

In the four cases when the edge consists of indefinite fold points, one of the critical points created or removed by the rotation has index one and the other has index two.  For one of these cases, the Reeb graphs for the level surfaces, defined by level slices of the Reeb complex, are shown in the bottom half of Figure~\ref{rotinflectfig}.  The corresponding level surfaces are shown in Figure~\ref{levelstabfig}.  The one-handle and two-handle define a stabilization in the induced Heegaard splitting.  Similar analysis of the other three cases shows that they also induce a single stabilization or destabilization in the induced Heegaard splittings.  (This is left to the reader.)
\begin{figure}[htb]
  \begin{center}
  \includegraphics[width=4.5in]{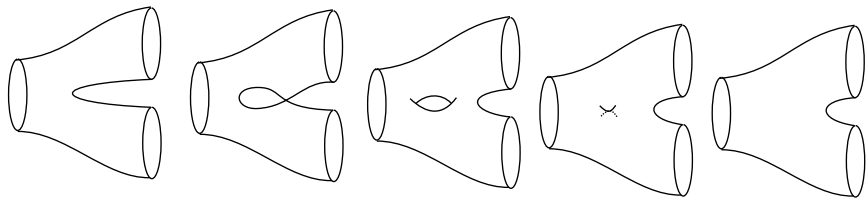}
  \caption{The level surface near an inflection point after it has been rotated through a horizontal angle.}
  \label{levelstabfig}
  \end{center}
\end{figure}

When $t$ passes through an angle where there are two or more critical points at the same level, the isotopy class of the induced Heegaard splitting does not change.  To see this, recall that by Lemma~\ref{kisaspinelem}, a spine of a handlebody (which determines the isotopy class of the Heegaard splitting) is given by a graph of descending edges in the Morse function.  When two critical points pass through the same level, the isotopy class of this graph does not change, so the isotopy class of the induced Heegaard splitting does not change.

The final case to consider is when $t$ passes through an angle where there is a horizontal cusp.  When $t$ passes through an angle where a type one cusp becomes horizontal, the number of horizontal tangencies does not change.  Whether or not the critical point at the cusp is degenerate when the cusp is horizontal, passing through the angle where the cusp is horizontal does not change the isotopy classes of the surfaces near the cusp and therefore does not change the isotopy class of the induced Heegaard splitting.  Figure~\ref{rotcusp1fig} shows how the Reeb graphs of the level sets change when the upper edge is a simple fold edge and the cusp points to the right.  The other cases are similar.
\begin{figure}[htb]
  \begin{center}
  \includegraphics[width=3in]{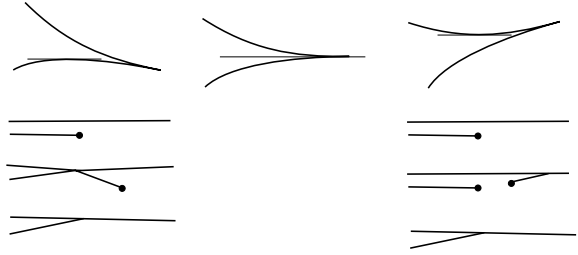}
  \caption{Rotating a type one cusp through a horizontal position replaces a horizontal tangency in one of the edges with a horizontal tangency in the other edge.  In the Reeb graphs for the level sets shown below, we assume that the upper edge is a definite fold edge.}
  \label{rotcusp1fig}
  \end{center}
\end{figure}

When $t$ passes through an angle where a type two cusp becomes horizontal, the number of horizontal tangencies either increases or decreases by two.  Because the Morse function changes, the critical point when the cusp is horizontal must be degenerate.  Figure~\ref{rotcusp2fig} shows how the level sets change in the case when the cusp is concave up, points to the right and the upper edge is a simple fold edge.  The other configurations of type two cusps are similar, and all correspond to a singl stabilization or destabilization of the Heegaard splitting.
\begin{figure}[htb]
  \begin{center}
  \includegraphics[width=3in]{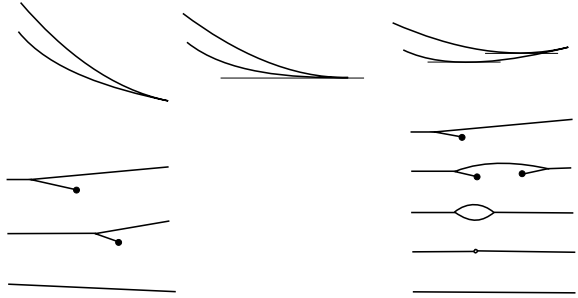}
  \caption{Rotating a type two cusp through a horizontal position creates or removes two horizontal tangencies.  In the Reeb graphs shown below, we assume the upper edge is a definite fold edge.}
  \label{rotcusp2fig}
  \end{center}
\end{figure}

\section{Proof of Theorem~\ref{mainthm}}
\label{proofsect}

Let $(\Sigma, H_1, H_2)$ and $(\Sigma', H'_1, H'_2)$ be Heegaard splittings of a closed 3-manifold $M$.  Construct Morse functions $f$ and $g$ such that $\Sigma$ and $\Sigma'$ are induced by $f$ and $g$ respectively.  Define $\phi_t = cos(t) g + sin(t) f$.  In order to prove Theorem~\ref{mainthm}, we will show first that the path defines a sequence of stabilizations and destabilizations and second that the number of steps in the sequence is less than or equal to the number of negative slope inflection points and type two cusps.

\begin{proof}[Proof of Theorem~\ref{mainthm}]
Isotope $f$ and $g$ so that in the graphic (i.e. the image in $\mathbf{R}^2$ of the discriminant set of $F = f \times g$), there are a finite number of points where the second derivative of the function defined by the edge is zero and finitely many doubly tangent straight lines.  There are then a finite number of angles $t_1 < \dots < t_n \frac{\pi}{2}$ such that rotating the graphic counter-clockwise by angle $t_i$ creates a horizontal inflection point, a horizontal cusp or two horizontal tangents at the same level.

By Lemma~\ref{2nderivlem}, for $t_i < t < t_{i+1}$, the function $\phi_t$ is a Morse function.  Because the arc $(t_i,t_{i+1})$ is contained in the set of Morse functions, any two functions in the arc are isotopic and induce isotopic Heegaard splittings of $M$.

The Heegaard splittings induced by functions in the arc $[0,t_1)$ are isotopic to $(\Sigma', H'_1, H'_2)$.  If rotating the graphic by angle $t_1$ produces a horizontal inflection point in a definite fold edge, a horizontal type one cusp or two horizontal tangents at the same level then, as we saw in Section~\ref{linpathsect}, the induced Heegaard splittings in $(t_1,t_2)$ are also isotopic to $(\Sigma', H'_1, H'_2)$.  If at angle $t_1$, there is a horizontal inflection point in an indefinite fold edge or a type two cusp then the Heegaard splittings induced by functions in the arc $(t_1,t_2)$ is a single stabilization or destabilization of $(\Sigma',H'_1,H'_2)$.

By repeating this argument at each angle $t_i$, we find a sequence of stabilizations and destabilizations.  The last Heegaard splitting, induced by functions in the arc $(t_n,\frac{\pi}{2}]$, is $(\Sigma,H_1,H_2)$.  Each step in the sequence corresponds to a negative slope inflection point or a negative slope type two cusp so by Lemma~\ref{ifcstabslem}, there is a common stabilization of genus $(p + q + c)/2$ where $p$ and $q$ are the genera of $\Sigma$ and $\Sigma'$, respectively and $c$ is the number of negative slope inflection points in indefinite fold edges and type two cusps.
\end{proof}

\bibliographystyle{amsplain}
\bibliography{inflects}

\end{document}